\newtheorem{theorem}{Theorem}[section]
\newtheorem{corollary}[theorem]{Corollary}
\newtheorem{lemma}[theorem]{Lemma}
\newtheorem{proposition}[theorem]{Proposition}
\theoremstyle{definition}
\numberwithin{equation}{section}
\newcommand{\m}{\textsf{m}}
\newcommand{\n}{\textsf{n}}
\newcommand{\nse}{\textsf{nse}}
\newcommand{\Out}{\textsf{Out}}
\newcommand{\N}{\textbf{N}}
\newcommand{\C}{\textbf{C}}
\newcommand{\Z}{\textbf{Z}}
\newcommand{\A}{\mathcal{A}}
\begin{document}

\title[A characterization of small Ree groups]{On quantitative structure of small Ree groups}

\author[S.H. Alavi]{Seyed Hassan Alavi$^*$}
\thanks{Corresponding author: S.H. Alavi}
\address{S.H. Alavi, Department of Mathematics, Faculty of Science, Bu-Ali Sina University, Hamedan, Iran}
\email{alavi.s.hassan@gmail.com (preferred)}
\email{alavi.s.hassan@basu.ac.ir}

\author[A. Daneshkhah]{Ashraf Daneshkhah}
\address{A. Daneshkhah, Department of Mathematics, Faculty of Science, Bu-Ali Sina University, Hamedan, Iran}
\email{daneshkhah.ashraf@gmail.com (preferred)}
\email{adanesh@basu.ac.ir}

\author[H. Parvizi Mosaed]{Hosein Parvizi Mosaed}
\address{H. Parvizi Mosaed, Alvand Institute of Higher Education, Hamedan, Iran}
\email{h.parvizi.mosaed@gmail.com}

\subjclass[2010]{Primary 20D60; Secondary 20D06.}
\keywords{Thompson's problem; small Ree group; simple group; element order.}

\maketitle

\begin{abstract}
  The main aim of this article is to study quantitative structure of small Ree Groups $^{2}G_{2}(q)$. Here, we prove that small Ree groups are uniquely determined by their orders and the set of the number of elements of the same order. As a consequence, we give a positive answer to J. G. Thompson's problem for simple groups $^{2}G_{2}(q)$.
\end{abstract}

\section{Introduction}

In 1987, J. G. Thompson possed a problem which is related to algebraic number fields \cite[Problem 12.37]{book:khukh}:

\begin{quote}
  For a finite group $G$ and natural number $n$, set $G(n) = \{x \in G \mid x^{n} = 1\}$ and define the type of $G$ to be the function whose value at $n$ is the order of $G(n)$. Is it true that a group is solvable if its type is the same as that of a solvable one?
\end{quote}

This problem links to the set $\nse(G)$ of \emph{the number of elements of the same order} in $G$. Indeed, it turns out that if two groups $G$ and $H$ are of the same order type, then $\nse(G)=\nse(H)$ and $|G|=|H|$. Therefore, if a group $G$ has been uniquely determined by its order and $\nse(G)$, then Thompson's problem is true for $G$. One may ask this problem for non-solvable groups, in particular, finite simple groups. In this direction, Shao et al \cite{art:Shao} studied finite simple groups whose order is divisible by at most four primes. Following this investigation, such problem has been studied for some families of simple groups including Suzuki groups $^{2}B_{2}(q)$ and small Ree groups ${}^2G_2(q)$ when $q\pm\sqrt{3q}+1$ is a prime number \cite{art:ADP-Sz, art:Parvizi}. In this paper, we prove that
\begin{theorem}\label{thm:main}
Let $G$ be a group with $\nse(G)=\nse({}^2G_2(q))$ and $|G|=|{}^2G_2(q)|$. Then $G$ is isomorphic to ${}^2G_2(q)$.
\end{theorem}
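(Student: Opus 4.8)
The plan is to follow the by-now-standard ``$\nse$ and order'' method: first squeeze local information about $G$ out of the arithmetic of $\nse({}^2G_2(q))$, then invoke the Gruenberg--Kegel theorem to exhibit a non-abelian composition factor of $G$, and finally identify that factor with ${}^2G_2(q)$ using the classification of finite simple groups together with the order hypothesis. Throughout, write $q=3^{2m+1}$ with $m\geq 1$, set $r=3^{m+1}$ and $M={}^2G_2(q)$, so that $|M|=q^{3}(q-1)(q+1)(q-r+1)(q+r+1)$ with $(q-r+1)(q+r+1)=q^{2}-q+1$ coprime to $6q(q^{2}-1)$ and $\gcd(q-r+1,q+r+1)=1$; recall also that $M$ has elementary abelian Sylow $2$-subgroups of order $8$, that $C_M(t)\cong 2\times\mathrm{PSL}_2(q)$ for an involution $t$ whence $m_2(M)=q^{2}(q^{2}-q+1)$, and that $\Gamma(M)$ has the three connected components $\pi_1=\{2,3\}\cup\pi(q^{2}-1)$, $\pi_2=\pi(q-r+1)$ and $\pi_3=\pi(q+r+1)$. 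The tools are elementary: $m_i(G)=\varphi(i)\,c_i(G)$ where $c_i(G)$ is the number of cyclic subgroups of order $i$, so $\varphi(i)\mid m_i(G)$; Frobenius' congruence $n\mid\sum_{d\mid n}m_d(G)$ for $n\mid|G|$; the identity $\sum_i m_i(G)=|G|$; and the fact that the number of subgroups of order $p$ is $\equiv 1\pmod p$, so $m_p(G)\equiv-1\pmod p$.

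First I would record the immediate consequences. Since $|G|=|M|$ is even, $2\in\pi(G)$, and since $\varphi(i)$ is even for $i\geq 3$ the only odd member of $\nse(M)$ exceeding $1$ is $m_2(M)$; as $m_2(G)$ is odd we get $m_2(G)=m_2(M)=q^{2}(q^{2}-q+1)$. From $|G|=|M|$ we have $\pi(G)=\pi(M)$ and $|G|_p=|M|_p$ for every prime $p$. Then, prime by prime, the membership $m_p(G)\in\nse(M)$ together with $m_p(G)\equiv-1\pmod p$ and $(p-1)\mid m_p(G)$ usually pins $m_p(G)$ down uniquely and yields $p\in\omega(G)$; in particular one controls the $3$-local data (so that the Sylow $3$-subgroup of $G$ has order $q^{3}$ and exponent dividing $9$) and one reads off the number of elements of order $p$ for each $p\in\pi_2\cup\pi_3$. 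The harder local claim is that $\Gamma(G)$ is still disconnected: one must show that $\omega(G)$ contains no product $pp'$ with $p\in\pi_2\cup\pi_3$ and $p'$ in another component. Here the divisibility $\varphi(pp')\mid m_{pp'}(G)$ alone does not suffice---some individual values of $\nse(M)$ do happen to be divisible by such a $\varphi(pp')$---so one argues globally, combining the possible shapes of $\nse(G)=\nse(M)$ with the counting identity $\sum_i m_i(G)=|G|$ and Frobenius' congruences for well-chosen moduli to rule out the extra element orders. This gives $t(G)\geq 3$.

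With $t(G)\geq 3$ in hand, $G$ is not a $2$-Frobenius group (those have $t=2$), and a short arithmetic argument---using $|H|\mid|K|-1$ for a Frobenius group with kernel $K$ and complement $H$, and the fact that $q^{3}\equiv-1\pmod{q^{2}-q+1}$ while $0<q-1<q^{2}-q+1$, together with size comparisons---excludes the Frobenius case as well, since the nilpotent kernel would have to be precisely one connected component and none of the resulting configurations is consistent with $|M|=q^{3}(q-1)(q+1)(q^{2}-q+1)$. Hence by the Gruenberg--Kegel theorem $G$ has a normal series $1\trianglelefteq H\trianglelefteq K\trianglelefteq G$ with $H$ nilpotent, $G/K$ soluble, $K/H$ a non-abelian simple group, and $\pi_2$, $\pi_3$ among the connected components of $\Gamma(K/H)$, so $t(K/H)\geq 3$ and $|K/H|$ divides $|M|$. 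Going through the classification of the (few) finite simple groups with at least three prime-graph components, and keeping only those whose two non-central components can be $\pi(q-r+1)$ and $\pi(q+r+1)$ and whose order divides $q^{3}(q-1)(q+1)(q-r+1)(q+r+1)$ --- most alternatives such as $\mathrm{PSL}_2(q^{3})$, $\mathrm{PSU}_3(q)$, $\mathrm{PSL}_3(q)$, and proper subfield groups ${}^2G_2(q_0)$ already fail the order or prime-divisibility test --- one is left with $K/H\cong{}^2G_2(q)$; any doubtful case surviving the order and component tests is then eliminated using the value $m_2(G)=q^{2}(q^{2}-q+1)$ and the $3$-local data computed above. Finally $|K/H|=|M|=|G|$ forces $H=1$ and $G=K$, so $G\cong{}^2G_2(q)$.

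The main obstacle is this last identification step in the very regime the paper addresses, namely when $q-r+1$ or $q+r+1$ is composite: then $\pi_2$ and $\pi_3$ consist of comparatively small primes, the ``large primitive prime divisor'' shortcut available when these numbers are prime (as in the earlier papers) no longer isolates the group, and one must carry out the CFSG case analysis carefully with the exact order $|M|$ and the $\nse$-data in hand. A secondary, more routine difficulty is the global $\nse$-argument needed for the disconnectedness of $\Gamma(G)$ and the prime-by-prime determination of the $m_p(G)$, both of which rest on an explicit description of $\nse({}^2G_2(q))$; these are calculations rather than conceptual hurdles.
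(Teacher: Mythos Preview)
Your overall architecture matches the paper's: compute $\nse({}^2G_2(q))$, show $\Gamma(G)$ has at least three components, invoke the Gruenberg--Kegel/Williams structure theorem, and then identify the simple section. The two substantive deviations are in how the disconnectedness of $\Gamma(G)$ is obtained and in how the candidate simple groups are filtered.

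For the disconnectedness step you propose to ``argue globally, combining the possible shapes of $\nse(G)=\nse(M)$ with the counting identity $\sum_i m_i(G)=|G|$ and Frobenius' congruences for well-chosen moduli.'' This is where your plan is genuinely incomplete: you yourself note that $\varphi(pp')\mid m_{pp'}(G)$ does not suffice, and you do not supply an actual mechanism to close the gap. The paper's argument is quite different and more structured. It partitions $\nse(R)$ into eight arithmetic classes $\A_1,\ldots,\A_8$, introduces $f(n)=\sum_{n\mid i}m_i(G)$ and its restrictions $f_t(n)$ to $\A_t$, and then uses Weisner's theorem (the number of elements whose order is a multiple of $n$ is divisible by the largest divisor of $|G|$ coprime to $n$) rather than just Frobenius' $n\mid|G(n)|$. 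The point is that Weisner gives a divisibility on $f(p)$, not merely on $|G(p)|$, and the class decomposition lets one isolate a nonzero summand $f_8(p)$ (respectively $f_7(p)$) and force a contradiction $|G|<f(p')$ in each of the four cases $p'\in\{2\}\cup\pi(q^3)\cup\pi((q^2-1)/8)\cup\pi(q\pm\sqrt{3q}+1)$. Your sketch does not contain this idea, and without it the step is not justified.

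For the identification of $K/H$ you propose to run through the tables of simple groups with $t(S)\geq 3$. The paper takes a shorter route: since $5\nmid|G|$, it quotes a classification (Shi) giving a finite explicit list, then uses $|G|_2=8$ to discard $G_2(q')$ and ${}^3D_4(q')$, the absence of elements of order $8$ (via Mazurov--Chen) and $16\mid|L_3(2^t)|,|U_3(2^t)|$ to reduce to $L_2(q')$ or ${}^2G_2(q')$, and finally compares odd order components to eliminate $L_2(q')$ (the unique numerical coincidence $q=27$, $q'=37$ being killed by an $\nse$ inequality). Your approach here is viable in principle, but the paper's filtering via $5\nmid|G|$ and the Sylow $2$-structure is considerably cleaner than a full pass through the $t\geq 3$ tables. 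Also, once $t(G)\geq3$ is established, the paper simply cites that Frobenius and $2$-Frobenius groups have exactly two components; your separate arithmetic exclusion of the Frobenius case is unnecessary.
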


As noted above, as an immediate consequence of Theorem~\ref{thm:main}, we have that

\begin{corollary}
  If $G$ is a finite group with the same type as $^{2}G_{2}(q)$, then $G$ is isomorphic to $^{2}G_{2}(q)$.
\end{corollary}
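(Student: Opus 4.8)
The plan is to deduce from the hypotheses $\nse(G)=\nse({}^2G_2(q))$ and $|G|=|{}^2G_2(q)|$ that $G$ has the same Sylow structure and, essentially, the same prime graph as $S:={}^2G_2(q)$, and then to invoke the Gruenberg--Kegel description of groups with disconnected prime graph to pin $G$ down. Write $q=3^{2m+1}$ with $m\ge1$ and $d:=3^{m+1}$, so $q^2-q+1=(q-d+1)(q+d+1)$ and $|S|=q^3(q-1)(q+1)(q-d+1)(q+d+1)$; for a finite group $X$ and $n\ge1$ let $m_n(X)$ be the number of elements of order $n$ in $X$ and $\omega(X)$ the set of element orders of $X$. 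The only facts about $\nse$ I would use are elementary: $\varphi(n)\mid m_n(G)$ whenever $m_n(G)\ne0$; Frobenius' theorem, that $n\mid\sum_{k\mid n}m_k(G)$ for every $n\mid|G|$, and in particular $m_p(G)\equiv-1\pmod p$ for each prime $p\in\pi(G)=\pi(S)$; and $\sum_n m_n(G)=|G|=|S|$. Since $\varphi(n)$ is even for every $n\ge3$, the only odd values among the $m_n(G)$ occur for $n\in\{1,2\}$; as the only odd members of $\nse(S)$ are $1$ and $m_2(S)=q^2(q^2-q+1)$, comparing the odd members of $\nse(G)$ and $\nse(S)$ forces $m_2(G)=q^2(q^2-q+1)$. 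A short argument using Frobenius' theorem at $n=4$ and the admissible values in $\nse(S)$ then yields $4\notin\omega(G)$, so a Sylow $2$-subgroup of $G$ is elementary abelian of order $8$.

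Next I would pin down the remaining ``small'' primes $p$, namely $3$ and the odd divisors of $q^2-1$. For such $p$ the number $m_p(G)$ lies in $\nse(S)$, is divisible by $p-1$, and is $\equiv-1\pmod p$; I expect these restrictions --- occasionally helped by Frobenius' theorem at $p^2$ (and at $27$ for $p=3$) and by $\sum_n m_n(G)=|S|$ --- to single out the unique admissible value $m_p(S)$ and likewise to force $m_{p^i}(G)=m_{p^i}(S)$ for the relevant prime powers $p^i$. Reading this off, a Sylow $p$-subgroup of $G$ has the same order as in $S$, is cyclic when $p$ is an odd divisor of $q^2-1$, and a Sylow $3$-subgroup of $G$ has order $q^3$ and exponent $9$. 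In particular $\omega(G)$ agrees with $\omega(S)$ on all prime powers dividing $6(q^2-1)q^3$, and the component of the prime graph $\Gamma(G)$ containing $2$ and $3$, say $\pi_1$, coincides with that of $\Gamma(S)$.

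The heart of the matter concerns the \emph{large} primes, those dividing $q-d+1$ or $q+d+1$. Fix such a prime $p$; I would show $pr\notin\omega(G)$ for every prime $r\ne p$ in $\pi(G)$, i.e.\ that $p$ is isolated in $\Gamma(G)$ from $\pi_1$. Having matched $m_p(G)$, and more generally $m_{p^i}(G)$ for the prime powers $p^i$ dividing $q\pm d+1$, with their values in $S$ by the congruence and divisibility arguments above, one learns that a Sylow $p$-subgroup $P$ of $G$ is cyclic of the expected order; counting the elements of order divisible by $p$ through the number of Sylow $p$-subgroups and $|N_G(P)|$ then forces $P$ to be a trivial-intersection subgroup whose normalizer is a $\{p\}$-group extended by a cyclic $p'$-group of tightly bounded order, and an element of order $pr$ would over- or under-count either $m_p(G)$ or $\sum_n m_n(G)$. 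Carrying this through \emph{without} assuming $q\pm d+1$ prime --- so that one must control the several prime powers dividing a composite $q\pm d+1$ rather than a single prime --- is exactly where the paper goes beyond earlier work, and I expect it to be the main obstacle. The conclusion is that $\Gamma(G)$ is disconnected, with $\pi(q-d+1)$ and $\pi(q+d+1)$ lying in components disjoint from $\pi_1$.

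Finally I would apply the Gruenberg--Kegel theorem: a finite group with disconnected prime graph is a Frobenius group, a $2$-Frobenius group (i.e.\ $G=ABC$ with $AB,BC$ normal and both Frobenius), or has a normal series $1\trianglelefteq N\trianglelefteq H\trianglelefteq G$ with $N$ and $G/H$ $\pi_1$-groups, $H/N$ a nonabelian simple group, and the non-$\pi_1$ components of $\Gamma(G)$ among those of $\Gamma(H/N)$. A Frobenius or $2$-Frobenius group of order $|S|$ with this prime graph is excluded by a short arithmetic check on the admissible kernel and complement orders --- a complement order must divide a kernel order minus one, which fails for every partition of $\pi(S)$ into the required pieces, using $q^3\equiv-1$ and $q^2\equiv q-1\pmod{q^2-q+1}$. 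Hence $T:=H/N$ is nonabelian simple, $|T|\mid|S|$, and both $\pi(q-d+1)$ and $\pi(q+d+1)$ are unions of prime graph components of $T$; going through the classification of finite simple groups with disconnected prime graph, and using a Zsygmondy-type argument that the prime divisors of $q-d+1$ and $q+d+1$ are primitive prime divisors not dividing $|{}^2G_2(q_0)|$ for any proper subfield parameter $q_0$ (which disposes of the subfield subgroups and the other candidates), the only surviving possibility is $T\cong{}^2G_2(q)$. Then $|T|=|S|=|G|$ forces $N=1$ and $H=G$, so $G\cong{}^2G_2(q)$, proving Theorem~\ref{thm:main}. The Corollary is then immediate, since two finite groups of the same order type have equal order (evaluate the type at the group orders) and equal $\nse$ (recover each $m_n$ from the type by M\"obius inversion), so Theorem~\ref{thm:main} applies.
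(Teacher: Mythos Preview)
Your final sentence is exactly the paper's proof of the Corollary: same type implies equal order and equal $\nse$, so Theorem~\ref{thm:main} applies. That part is correct and identical to the paper.

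Everything preceding it, however, is a sketch of Theorem~\ref{thm:main} itself, and there your outline diverges from the paper's argument. The paper does \emph{not} attempt to pin down $\m_p(G)$ prime by prime, does not argue that Sylow subgroups of $G$ are cyclic or TI, and does not analyse $4\in\omega(G)$. Instead it partitions $\nse(R)$ into the eight blocks $\A_1,\ldots,\A_8$ and uses Weisner's theorem (Lemma~\ref{lem:multiple}) to show directly that for any prime $p\mid q\mp\sqrt{3q}+1$ the count $f(p)$ of elements of order divisible by $p$ cannot accommodate a contribution from any block other than $\A_8$ (respectively $\A_7$); this yields at once that $\Gamma(G)$ has at least three components (Proposition~\ref{prop:isolated}). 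The exclusion of the Frobenius and $2$-Frobenius cases is then a one-liner (such groups have exactly two components, citing \cite{art:Chen}), not an arithmetic check on kernel and complement orders. For the simple section $K/H$, the paper does not run through the full disconnected-prime-graph classification plus Zsygmondy; it uses $5\nmid|G|$ to invoke Shi's list \cite{art:Shi2}, discards candidates by the size of a Sylow $2$-subgroup and the absence of elements of order $8$ (via \cite{art:Mazurov-L4-U3}), and is left with $L_2(q')$ or ${}^2G_2(q')$, which are finished off by matching odd order components.

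Your sketch is therefore a genuinely different route, and it has real gaps. The step you yourself flag --- showing that each prime $p\mid q\pm\sqrt{3q}+1$ is isolated in $\Gamma(G)$ without assuming $q\pm\sqrt{3q}+1$ prime --- is not carried out; the TI/normalizer-counting mechanism you propose presupposes that you already know $\m_{p^i}(G)$ and that the Sylow $p$-subgroup of $G$ is cyclic, neither of which has been established from $\nse$ alone. Likewise, the repeated ``I expect these restrictions to single out $\m_p(S)$'' for the small primes is asserted, not checked, and the paper in fact never needs it. If you want to complete your approach you would have to replace these heuristics by the kind of block-by-block divisibility bookkeeping the paper does in Proposition~\ref{prop:isolated}; otherwise, the cleanest fix is simply to cite Theorem~\ref{thm:main} and keep only your last sentence.
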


In order to prove Theorem~\ref{thm:main}, we need to prove some new results on subgroups structure of ${}^2G_2(q)$ and use some well-known facts, see Lemmas \ref{lem:sylow}-\ref{lem:maxes}. These detailed information help us to determine the number of elements of ${}^2G_2(q)$ of the same order in Proposition~\ref{prop:nse}. Then we prove that the prime graph of the group $G$ satisfying hypotheses of Theorem~\ref{thm:main} has at least three components, see Proposition~\ref{cor:three}, and then we show that a section of $G$ is isomorphic to ${}^2G_2(q)$. Finally, we prove that $G$ is isomorphic to ${}^2G_2(q)$.

Finally, some brief comments on the notation used in this paper. Throughout this article all groups are finite. Our group theoretic notation is standard, and it is consistent with the notation in \cite{book:Car,book:atlas,book:Gor}. We denote a Sylow $p$-subgroup of $G$ by $G_p$. We also use $\n_p(G)$ to  denote the number of Sylow $p$-subgroups of $G$. For a positive integer $n$, the set of prime divisors of $n$ is denoted by $\pi(n)$, and we set $\pi(G):=\pi(|G|)$, where $|G|$ is the order of $G$. We denote the set of elements' orders of $G$ by $\omega(G)$ known as \emph{spectrum} of $G$. The \emph{prime graph} $\Gamma(G)$ of a finite group $G$ is a graph whose vertex set is $\pi(G)$, and two vertices $u$ and $v$ are adjacent if and only if $uv\in\omega(G)$. Assume further that $\Gamma(G)$ has $t(G)$ connected components $\pi_i$, for $i=1,2,\hdots,t(G)$. The positive integers $n_{i}$ with $\pi(n_{i})=\pi_{i}$ are called order components of $G$. In the case where $G$ is of even order, we always assume that $2\in\pi_1$, and $\pi_{1}$ is said to be the even component of $G$. In this way, $\pi_{i}$ and $n_{i}$ are called odd components and odd order components of $G$, respectively. Recall that $\nse(G)$ is the set of the number of elements in $G$ with the same order. In other word, $\nse(G)$ consists of the numbers $\m_i(G)$ of elements of order $i$ in $G$, for $i\in \omega(G)$. 

\section{Some properties of $^{2}G_{2}(q)$}\label{sec:Ree}

In this section, we mention some useful information about the small Ree group $^{2}G_{2}(q)$ using \cite{book:Car,art:Levchuk}. Here, we consider a representation of the Ree group, based on a known representation of \emph{Chevalley group of type  $G_{2}$} and fix notation for some of its  elements and subgroups. Let $\Phi$ be a root system of type $G_{2}$, and let $a$ and $b$ be its simple roots of length $1$ and $\sqrt{3}$, respectively. Let $F_{q}$ be a finite field of size $q=3^{2n+1}$. For $\iota\in \Phi$ and $t\in F_{q}$, define $x_{\iota}(t)\in SL_{7}(F_{q})$ as follows. Here $e$ is the identity matrix and $e_{ij}$ are matrix units.
\begin{align*}
  x_{a}(t)&= e+t(e_{67}+2e_{45}-e_{34}-e_{12})-t^{2}e_{35};
  &x_{b}(t)= e+t(e_{56}-e_{23}); \\
  x_{-a}(t)&= e+t(e_{76}+2e_{54}-2e_{45}-e_{21})-t^{2}e_{53};
  &x_{-b}(t)= e+t(e_{65}-e_{32}); \\
  x_{a+b}(t)&= e+t(e_{13}-e_{24}+2e_{46}-e_{57})-t^{2}e_{26};
  &x_{3a+b}(t)= e+t(e_{15}-e_{37}); \\
  x_{-a-b}(t)&= e+t(e_{31}+2e_{42}+e_{64}-e_{75})-t^{2}e_{62};
  &x_{-3a-b}(t)= e+t(e_{51}-e_{73}); \\
  x_{2a+b}(t)&= e+t(2e_{47}+e_{36}-e_{25}-e_{14})-t^{2}e_{17};
  &x_{3a+2b}(t)= e+t(e_{27}-e_{16}); \\
  x_{-2a-b}(t)&= e+t(2e_{74}+e_{63}-e_{52}-2e_{41})-t^{2}e_{71};
  &x_{-3a-2b}(t)= e+t(e_{72}-e_{61}).
\end{align*}

The root system $\Phi$ of type $G_{2}$ posses a symmetry $\iota\mapsto \overline{\iota}$ which can be defined by $\overline{\overline{\iota}}=\iota$, $-\overline{\iota}=\overline{-\iota}$, $\overline{a}=b$, $\overline{a+b}=3a+b$ and $\overline{2a+b}=3a+2b$. Then $F_{q}$ admits an automorphism $\theta$ with $3\theta^{2}=1$, that is to say, $\theta$ is raising to the power $3^{n}$ where $q=3^{2n+1}$. This allows to define an automorphism $\sigma: x_{\iota}(t)\mapsto x_{\overline{\iota}}(t^{\theta(\overline{\iota},\overline{\iota})})$, see~\cite[12.4 and 13.4]{book:Car}. The small Ree group $^{2}G_{2}(q)$ is defined to be the subgroup of points fixed by $\sigma$ which is generated by
\begin{align*}
  \alpha(t):=& x_{a}(t^{\theta})x_{b}(t)x_{a+b}(t^{\theta+1})x_{2a+b}(t^{2\theta+1}),\\
  \beta(t):=& x_{a+b}(t^{\theta})x_{3a+b}(t),\\
  \gamma(t):=& x_{2a+b}(t^{\theta})x_{3a+2b}(t),\\
  \tau:=\tau(1) =& \pi_{a+b}(1)\pi_{3a+b}(1),
\end{align*}
where $\pi(t):= x_{\iota}(t)x_{\iota}(-t^{-1})x_{\iota}(t)$. Note also that the subgroup generated by $\alpha(t)$, $\beta(t)$ and $\gamma(t)$ forms a Sylow $3$-subgroup of the group $^{2}G_{2}(q)$ and each of its element can uniquely be written in the form $\alpha(t)\beta(u)\gamma(v)$ with $t,u,v\in F_{q}$~\cite[13.6.4]{book:Car}. Moreover, if $x(t,u,v):=\alpha(t)\beta(u)\gamma(v)$, then
\begin{align}\label{eq:x-prod}
\nonumber  x(t_{1},u_{1},v_{1})x(t_{2},u_{2},v_{2})=x(t_{1}+t_{2},&u_{1}+u_{2}-t_{1}t_{2}^{3\theta},\\
  &v_{1}+v_{2}-t_{2}u_{1}+t_{1}t_{2}^{3\theta+1}-t_{1}^{2}t_{2}^{3\theta}).
\end{align}


\begin{lemma}\label{lem:sylow}
Let $R={}^2G_2(q)$. Then
\begin{enumerate}[{ \quad \rm (a)}]
  \item for every prime $p>3$, the Sylow $p$-subgroups $R_{p}$ of $R$ are cyclic;
  \item any two Sylow $3$-subgroups $R_{3}$ of $R$ have a trivial intersection;
  \item $\N_{R}(R_{3})\cong q^{3}:C_{q-1}$ contains the normalizers of all of its nontrivial subgroups;
  \item $R_{3}$ has $q^{2}-1$ elements of order $3$;
  \item $\N_{R}(R_{2})\cong 8:7:3$ and $R_{2}$ is a self-centralizing elementary Abelian subgroup of order $8$;
  \item $2$-subgroups of equal orders are conjugate in the group $R$;
  \item the centralizer of the involution $z$ in $R$ is equal $\langle z \rangle\times L_2(q)$.
\end{enumerate}
\end{lemma}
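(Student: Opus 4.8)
The plan is to derive parts (a), (c), (e) and (g) from the well-documented structure of $R={}^{2}G_{2}(q)$ --- using the construction recalled above together with \cite[Ch.~13]{book:Car} and the description of the subgroups of $R$ in \cite{art:Levchuk} --- with (b) and (f) then falling out of (c) and (e) respectively, while (d) is handled by a short direct computation. For (d): write a general element of $R_{3}$ as $x(t,u,v)=\alpha(t)\beta(u)\gamma(v)$ and apply the multiplication rule \eqref{eq:x-prod} twice. One first obtains $x(t,u,v)^{2}=x(2t,\,2u-t^{3\theta+1},\,2v-tu)$, the two cubic terms in the last coordinate having cancelled; squaring once more, where characteristic $3$ kills the $t$- and $u$-coordinates and collapses several remaining terms, yields
\[
x(t,u,v)^{3}=x\bigl(0,\,0,\,-t^{3\theta+2}\bigr).
\]
Since $t\mapsto t^{3\theta+2}$ vanishes only at $t=0$, an element $x(t,u,v)$ has order dividing $3$ exactly when $t=0$; hence the elements of $R_{3}$ of order at most $3$ form the subgroup $\{x(0,u,v):u,v\in F_{q}\}$ of order $q^{2}$, of which exactly $q^{2}-1$ have order $3$. (The same identity also shows that $R_{3}$ has exponent $9$ for $q>3$ and that $x\mapsto x^{3}$ maps $R_{3}$ into its centre $\{x(0,0,v):v\in F_{q}\}$.)

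For the $3$-local statements I would use the classical facts that $R$ acts $2$-transitively on a set of size $q^{3}+1$ with $B:=\N_{R}(R_{3})\cong q^{3}:C_{q-1}$ a point stabiliser, and that $B$ is a Frobenius group with kernel $R_{3}$ and complement $C_{q-1}$ (see \cite[Ch.~13]{book:Car}). Then $R_{3}$ acts regularly on the complement of its unique fixed point $\omega_{0}$, so every nontrivial subgroup $H\le R_{3}$ acts semiregularly there and therefore fixes only $\omega_{0}$; since $\N_{R}(H)$ permutes the fixed-point set of $H$, it fixes $\omega_{0}$ and hence lies in $B$, which is (c). Item (b) then follows by the usual normaliser-growth argument: if $1\ne D=R_{3}\cap R_{3}^{g}$ with $R_{3}\ne R_{3}^{g}$, choose $D<D_{1}\le\N_{R_{3}^{g}}(D)\le\N_{R}(D)$; by (c), $\N_{R}(D)\le B$, so the $3$-group $D_{1}$ lies in $R_{3}$, forcing $D_{1}\le R_{3}\cap R_{3}^{g}=D$, a contradiction. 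For (a): the maximal tori of $R$ are cyclic of orders $q-1$, $q+1$, $q-\sqrt{3q}+1$ and $q+\sqrt{3q}+1$, pairwise coprime apart from the factor $2$ shared by $q\pm1$; so for a prime $p>3$ the number $|R|_{p}$ equals the $p$-part of one of these four, whence that torus already contains a cyclic Sylow $p$-subgroup of $R$ and all Sylow $p$-subgroups of $R$ are cyclic.

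For the $2$-local statements, (g) is Ree's theorem that $\C_{R}(z)=\langle z\rangle\times L_{2}(q)$ for an involution $z$ (see \cite[Ch.~13]{book:Car}). Since $q=3^{2n+1}\equiv 3\pmod 8$, a Sylow $2$-subgroup of $L_{2}(q)$ is a four-group, so $R_{2}\cong\langle z\rangle\times 2^{2}$ is elementary abelian of order $8$; moreover a four-subgroup of $L_{2}(q)$ is self-centralising in $L_{2}(q)$, so no element of odd order centralises $R_{2}$ and $\C_{R}(R_{2})=R_{2}$. Consequently $\N_{R}(R_{2})/R_{2}$ embeds with odd order into $\Aut(R_{2})=GL_{3}(2)$; using the known $2$-local structure of $R$ (see \cite{art:Levchuk}) to identify $\N_{R}(R_{2})$ with the subgroup $2^{3}:7:3$, this quotient is $7:3$ acting on $R_{2}$ as the normaliser of a Singer cycle in $GL_{3}(2)$, which gives (e). Finally (f): a $2$-subgroup of order $8$ is a Sylow $2$-subgroup, so these are conjugate by Sylow's theorem; any $2$-subgroup of order $2$ or $4$ lies in a conjugate of $R_{2}$, and since $7:3\le GL_{3}(2)$ is transitive both on the seven nonzero vectors of $R_{2}$ and on its seven hyperplanes, all subgroups of $R_{2}$ of a fixed order are conjugate under $\N_{R}(R_{2})$; hence $2$-subgroups of $R$ of equal order are conjugate in $R$.

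The step I expect to be the main obstacle is pinning down the precise shape of the normalisers in (c) and (e): making the $2$-transitive/Frobenius description of $B$ fully rigorous from the matrix definition (or extracting it cleanly from the literature), and verifying that the odd-order quotient $\N_{R}(R_{2})/R_{2}$ is exactly $7:3$ and acts irreducibly on $R_{2}$. Granting these, the remaining parts --- the computation (d), the conjugacy statements in (f), and the torus argument in (a) --- are routine.
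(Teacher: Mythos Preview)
Your proposal is correct and, for the only part the paper argues in detail, essentially identical: the computation of $x(t,u,v)^{3}$ via \eqref{eq:x-prod} in characteristic $3$ to get $x(0,0,-t^{3\theta+2})$, hence $q^{2}-1$ elements of order $3$, is exactly the paper's proof of (d).

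For (a)--(c) and (e)--(g) the paper gives no argument at all---it simply cites \cite[p.~20]{art:Levchuk}---whereas you supply self-contained derivations (the $2$-transitive/Frobenius description of $B$ for (c), the normaliser-growth argument for (b), Ree's involution centraliser and the Sylow $2$-structure of $L_{2}(q)$ for (e) and (g), and the Singer-cycle transitivity for (f)). These are all sound and give what the citation would. One small inaccuracy in your sketch of (a): the maximal torus of order $q+1$ in ${}^{2}G_{2}(q)$ is $2^{2}\times C_{(q+1)/4}$, not cyclic; but since you only need the torus containing $|R|_{p}$ for primes $p>3$, the odd part $C_{(q+1)/4}$ suffices and the conclusion stands.
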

\begin{proof}
All parts except (d) follow from \cite[p. 20]{art:Levchuk}. To prove part (d), by \eqref{eq:x-prod} and calculation, we observe that $x(t,u,v)^{3}=\alpha(3t)\beta(3u-3t^{3\theta+1})\gamma(3v-3tu-t^{3\theta+2})$, and since the characteristic of $F_{q}$ is $3$, we conclude that every element of order $3$ in $R_3$ is of the form $\alpha(0)\beta(0)\gamma(-t^{3\theta+2})$. Therefore, there exist exactly $q^{2}-1$ elements of order $3$ in $R_3$.
\end{proof}

\begin{lemma}\label{lem:Hall-subs}
Let $R$ be the small Ree group ${}^2G_2(q)$. Then $R$  has cyclic Hall subgroups $H_{t}$, for $t=1,2,3,4$, such that
\begin{enumerate}[{ \quad \rm (a)}]
  \item $|H_{1}|=(q-1)/2$, $|H_{2}|=(q+1)/4$, $|H_{3}|=q-\sqrt{3q}+1$ and $|H_{4}|=q+\sqrt{3q}+1$;
  \item $\N_R(H_{1})\cong D_{2(q-1)}$;
  \item $\N_R(H_{2})\cong (2^{2}\times C_{\frac{q+1}{2}}):3$;
  \item $\N_R(H_3)\cong C_{q-\sqrt{3q}+1}:6$;
  \item $\N_R(H_4)\cong C_{q+\sqrt{3q}+1}:6$;
  \item if $1\neq A\leq H_{t}$, then $\N_{R}(A)=\N_{R}(H_{t})$, for $t=1,2,3,4$.
\end{enumerate}

\end{lemma}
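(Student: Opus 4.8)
The plan is to establish the existence of the four cyclic Hall subgroups $H_t$ and their normalizers by reading off the relevant maximal subgroup structure of $R={}^2G_2(q)$ from the literature, and then to deduce part (f) internally. First I would recall (from Levchuk--Nuzhin or the list of maximal subgroups of ${}^2G_2(q)$, which the paper will cite as Lemma~\ref{lem:maxes}) that the maximal tori of $R$ are cyclic of orders $(q-1)/2$, $(q+1)/4$, $q-\sqrt{3q}+1$ and $q+\sqrt{3q}+1$, and that the corresponding torus normalizers are respectively a dihedral group $D_{2(q-1)}$, a group $(2^2\times C_{(q+1)/2}){:}3$, and two Frobenius-type groups $C_{q\mp\sqrt{3q}+1}{:}6$. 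Setting $H_t$ to be the unique (up to conjugacy) cyclic subgroup of the stated order inside each such torus normalizer immediately gives (a)--(e); here one uses that $\gcd$ of the relevant numbers with $|R|/|H_t|$ makes $H_t$ a genuine Hall subgroup, and the factorization $|{}^2G_2(q)|=q^3(q^3+1)(q-1)=q^3(q-1)(q+1)(q-\sqrt{3q}+1)(q+\sqrt{3q}+1)$ shows these four numbers are pairwise coprime and each coprime to its cofactor.

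For part (f), the key point is that each $H_t$ is \emph{self-centralizing} up to the obvious automorphisms, so any nontrivial subgroup $A\le H_t$ has centralizer contained in $H_t$, and hence $\N_R(A)$ normalizes $\C_R(A)\supseteq H_t$; combined with the fact that $H_t$ is the unique Hall $\pi(H_t)$-subgroup of $\C_R(A)$ (it is cyclic, and $\C_R(A)$ is contained in a torus normalizer whose torus part is exactly $H_t$), we get that $\N_R(A)$ normalizes $H_t$, i.e. $\N_R(A)\le \N_R(H_t)$. The reverse inclusion is easier: since $H_t$ is cyclic, any automorphism of $H_t$ induced by $\N_R(H_t)$ either fixes $A$ (when the action is by inversion or by a scalar of appropriate order, because $A$ is the unique subgroup of $H_t$ of its order), so $\N_R(H_t)\le \N_R(A)$. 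Actually the cleanest route is to note that $H_t$ is cyclic, so it has a unique subgroup of each order dividing $|H_t|$; therefore $A$ is characteristic in $H_t$, and any $g$ normalizing $H_t$ automatically normalizes $A$, giving $\N_R(H_t)\le\N_R(A)$, while the previous paragraph gives the opposite inclusion. Thus equality holds for $t=1,2,3,4$.

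I expect the main obstacle to be part (f) in the case $t=1,2$, where $2\in\pi(H_t)$ is involved and one must be careful that $\C_R(A)$ is not larger than $H_t$ — for $t=1$ a subgroup $A$ of even order in $H_1=C_{(q-1)/2}$ contains an involution whose centralizer, by Lemma~\ref{lem:sylow}(g), is $\langle z\rangle\times L_2(q)$, so one must verify that $\C_R(A)$, the intersection of this with the centralizer of the odd part, reduces to $H_1$ (this uses the structure of centralizers of semisimple elements in $L_2(q)$, namely that a noncentral semisimple element of $L_2(q)$ has cyclic centralizer of order $(q-1)/2$ or $(q+1)/2$). Once that local analysis is pinned down, the argument for $t=3,4$ is immediate because $\pi(H_t)$ consists of primes $>3$ dividing exactly one of $q\pm\sqrt{3q}+1$, so $\C_R(A)$ lies in a single maximal torus by coprimality and Lemma~\ref{lem:sylow}(a). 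I would present (f) uniformly: $A$ char $H_t$ gives one inclusion, and self-centralization of the relevant torus gives the other.
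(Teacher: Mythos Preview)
Your treatment of parts (a)--(e) matches the paper: both simply cite Levchuk--Nuzhin for the existence of the cyclic Hall subgroups and the structure of their normalizers. Your easy inclusion $\N_R(H_t)\le\N_R(A)$ in (f) is also exactly the paper's argument (the subgroup $A$ is characteristic in the cyclic group $H_t$).

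Where you diverge is in the hard inclusion $\N_R(A)\le\N_R(H_t)$. The paper does not argue this internally at all: it simply quotes Levchuk--Nuzhin for the statement $\N_R(A)\subseteq\N_R(H_t)$ and is done. Your proposed internal route (pass to $\C_R(A)\supseteq H_t$, then argue $H_t$ is characteristic there) has a circularity problem in the paper's logical structure: the assertion that ``$\C_R(A)$ is contained in a torus normalizer whose torus part is exactly $H_t$'' is essentially the content of Lemma~\ref{lem:cen-H12}, whose proof \emph{uses} part (f) of the present lemma. So within this paper you cannot derive (f) from centralizer information without appealing to Levchuk anyway---and once you do that, you may as well quote $\N_R(A)\subseteq\N_R(H_t)$ directly, as the paper does.

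One small correction to your obstacle analysis: since $q=3^{2m+1}\equiv3\pmod 8$, both $(q-1)/2$ and $(q+1)/4$ are odd, so $2\notin\pi(H_t)$ for any $t$. Your anticipated difficulty with involutions in $H_1$ therefore does not arise.
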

\begin{proof}
Parts (a)-(e) follow from \cite{art:Levchuk} by replacing $A_{i}$ by $H_{i+1}$ in the original paper. To prove part (f), by \cite{art:Levchuk}, we have that $\N_{R}(A)\subseteq \N_{R}(H_{t})$. On the other hand, if $x\in \N_{R}(H_{t})$, then $H_{t}^{x}=H_{t}$, and since $A\leq H_{t}$, $A^{x}\leq H_{t}^{x}=H_{t}$. But $H_{t}$ is cyclic and so it has unique subgroup of order $|A|$. Then $A^{x}=A$, and this finishes the proof.
\end{proof}

We now observe that
\begin{align*}
  |R|=|^{2}G_{2}(q)|&=q^{3}(q^{3}+1)(q-1)\\
                    &=|R_{2}|\cdot |R_{3}|\cdot |H_{1}|\cdot |H_{2}|\cdot |H_{3}|\cdot |H_{4}|,
\end{align*}
where $R_{p}$, for $p=2,3$, is the Sylow $p$-subgroup of $R$, and $H_{t}$, for $t=1,2,3,4$, is the Hall subgroup as in Lemma~\ref{lem:Hall-subs}.

\begin{lemma}{\rm \cite{art:Levchuk}}\label{lem:solv}
A solvable subgroup of $R={}^2G_2(q)$ is conjugate to a subgroup of $\N_{R}(H_t)$ or $\N_{R}(R_{p})$, for $t=1,2,3,4$ and $p=2,3$.
\end{lemma}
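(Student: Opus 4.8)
The plan is to reduce to the maximal subgroups of $R={}^2G_2(q)$. By the classification of maximal subgroups of $R$ (Lemma~\ref{lem:maxes}), together with the identifications supplied by Lemmas~\ref{lem:sylow} and~\ref{lem:Hall-subs}, every maximal subgroup of $R$ is conjugate to one of $\N_R(R_3)$, $\C_R(z)=\langle z\rangle\times L_2(q)$, $\N_R(H_2)$, $\N_R(H_3)$, $\N_R(H_4)$, or to a subfield subgroup ${}^2G_2(q_0)$ with $q=q_0^r$ for an odd prime $r$. So I would take a solvable subgroup $S\le R$, fix a maximal subgroup $M\ge S$, and note that if $M$ is conjugate to $\N_R(R_3)$, $\N_R(H_2)$, $\N_R(H_3)$ or $\N_R(H_4)$ there is nothing to prove. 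Two cases remain: $S\le\C_R(z)$, and $S$ contained in a subfield subgroup.

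For the first case I would project $S$ into $L_2(q)\cong\mathrm{PSL}_2(q)$. Writing $\bar S$ for the image, we have $S\le\langle z\rangle\times\bar S$ with $\bar S$ a solvable subgroup of $\mathrm{PSL}_2(q)$; since $q=3^{2n+1}\equiv3\pmod 8$, the group $\mathrm{PSL}_2(q)$ has no subgroup isomorphic to $S_4$, so by Dickson's description of its subgroups $\bar S$ lies in a Borel subgroup $E_q\rtimes C_{(q-1)/2}$, in a dihedral group $D_{q-1}$, in a dihedral group $D_{q+1}$, or in a copy of $A_4$. Accordingly I would argue: if $\bar S$ lies in a Borel subgroup, then (as $z$ centralises $L_2(q)$) $S\le\langle z\rangle\times(E_q\rtimes C_{(q-1)/2})$ normalises the $3$-subgroup $E_q$, which lies in some Sylow $3$-subgroup $R_3$, so $S\le\N_R(E_q)\le\N_R(R_3)$ by Lemma~\ref{lem:sylow}(b),(c); if $\bar S\le D_{q-1}$ then $S\le\langle z\rangle\times D_{q-1}$ normalises its (characteristic) cyclic subgroup of order $(q-1)/2$, which is conjugate in $R$ to $H_1$, so $S$ lies in a conjugate of $\N_R(H_1)=D_{2(q-1)}$ by Lemma~\ref{lem:Hall-subs}(b),(f); the case $\bar S\le D_{q+1}$ is the same with the characteristic cyclic subgroup of order $(q+1)/4$ and $H_2$ in place of $H_1$; and if $\bar S\le A_4$ then $\langle z\rangle\times A_4$ normalises its Sylow $2$-subgroup, an elementary abelian group of order $8$, hence a Sylow $2$-subgroup of $R$, so $\langle z\rangle\times A_4$ lies in a conjugate of $\N_R(R_2)$ by Lemma~\ref{lem:sylow}(e),(f).

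For the subfield case I would induct on $q$. If $S\le{}^2G_2(q_0)$ with $q=q_0^r$, then by the inductive hypothesis $S$ is conjugate inside ${}^2G_2(q_0)$ to a subgroup of $\N_{{}^2G_2(q_0)}(H_t(q_0))$ or of $\N_{{}^2G_2(q_0)}(R_p(q_0))$, and it then suffices to carry each such normaliser into a conjugate of a standard subgroup of $R$. Here I would use that $R_3(q_0)$ lies in a Sylow $3$-subgroup $R_3$ of $R$, so $\N_R(R_3(q_0))\le\N_R(R_3)$ by Lemma~\ref{lem:sylow}(b),(c); that $|{}^2G_2(q)|_2=8$ for every admissible $q$, so $R_2(q_0)$ is already a full Sylow $2$-subgroup of $R$; and that each $H_t(q_0)$ is a cyclic subgroup whose order divides exactly one of $(q-1)/2$, $(q+1)/4$, $q-\sqrt{3q}+1$, $q+\sqrt{3q}+1$ (which one being determined by $r$ modulo $6$, via $q_0-1\mid q-1$ and $q_0^2-q_0+1\mid q^3+1$), hence is conjugate in $R$ into the corresponding $H_s$, so that Lemma~\ref{lem:Hall-subs}(f) finishes each subcase. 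The base case $q_0=3$, where ${}^2G_2(3)\cong\mathrm{PSL}_2(8):3$ is not simple, would be handled directly from the short subgroup list of $\mathrm{PSL}_2(8):3$.

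The main obstacle is the analysis of $S\le\C_R(z)$: although Dickson's theorem hands us the four types of solvable subgroups of $\mathrm{PSL}_2(q)$, deciding, for each type, exactly which of $\N_R(R_3),\N_R(R_2),\N_R(H_1),\dots,\N_R(H_4)$ contains a conjugate of its product with $\langle z\rangle$ needs the precise structure of the torus normalisers and of $\N_R(R_2)$, the conjugacy of maximal tori of a fixed order, and the fact that all involutions of $R$ are conjugate -- in effect the whole of Lemmas~\ref{lem:sylow} and~\ref{lem:Hall-subs}. A secondary difficulty is the bookkeeping in the subfield case: one must verify that the tori $H_t(q_0)$ are carried into the tori of $R$ as claimed, and that no new conjugacy class of solvable subgroups is created on passing from ${}^2G_2(q_0)$ to $R$.
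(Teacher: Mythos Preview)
The paper does not prove this lemma at all: it is stated with a bare citation to Levchuk--Nuzhin and no argument is given. So there is no ``paper's own proof'' to compare your proposal against.

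That said, your outline has two structural problems within the logic of \emph{this} paper. First, you open by invoking the maximal-subgroup list (Lemma~\ref{lem:maxes}), which in the paper appears \emph{after} Lemma~\ref{lem:solv} and is drawn from the same source; in Levchuk--Nuzhin the classification of solvable subgroups is in fact one of the ingredients used to obtain the maximal subgroups, not a corollary of it, so your reduction reverses the intended flow.

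Second, and more concretely, in the dihedral cases $\bar S\le D_{q\pm1}$ you assert that the characteristic cyclic subgroup of order $(q-1)/2$ (respectively $(q+1)/4$) is conjugate in $R$ to $H_1$ (respectively $H_2$), citing Lemma~\ref{lem:Hall-subs}(b),(f). But part~(f) only identifies $\N_R(A)$ for $A$ \emph{already contained in} $H_t$; it says nothing about conjugacy of an arbitrary cyclic subgroup of the right order into $H_t$. The conjugacy statement you need is exactly Lemma~\ref{lem:cyclic}, and that lemma is proved in the paper \emph{using} Lemma~\ref{lem:solv}. So as written your argument is circular. You can break the circle in the $H_1$ case by observing that any cyclic group of order $(q-1)/2$ is a Hall $\pi$-subgroup of $R$ for $\pi=\pi((q-1)/2)$ (an odd set of primes) and then applying Lemma~\ref{lem:number-conj} directly; the same device works for $H_2,H_3,H_4$. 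The induction step has the same issue: your claim that each $H_t(q_0)$ is ``conjugate in $R$ into the corresponding $H_s$'' again needs Lemma~\ref{lem:cyclic} or the Hall-subgroup workaround, not Lemma~\ref{lem:Hall-subs}(f). A further wrinkle in the $D_{q+1}$ case is that $\langle z\rangle\times D_{q+1}$ is nonabelian of order $2(q+1)$, coprime to $3$, while the index-$3$ normal subgroup of $\N_R(H_2)\cong(2^2\times C_{(q+1)/2})\!:\!3$ is abelian; so the containment you are aiming for is not as immediate as you suggest and requires a closer look at the structure of $\N_R(H_2)$.
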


\begin{lemma}\label{lem:number-conj}
Let $S$ be a simple group, and let $\pi$ be a set of primes. If $2\not \in \pi$, then there exists at most one conjugacy class of Hall $\pi$-subgroups of $S$.
\end{lemma}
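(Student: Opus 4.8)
The plan is to deduce this from two classification-free facts: the Feit--Thompson odd order theorem and Wielandt's theorem on the conjugacy of solvable Hall subgroups. First I would dispose of the trivial case: if $S$ has no Hall $\pi$-subgroup at all, then it has zero conjugacy classes of such subgroups and there is nothing to prove, so I may assume $S$ possesses at least one Hall $\pi$-subgroup $H$.

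The key observation is that, since $2\notin\pi$, the order of $H$ is the $\pi$-part of $|S|$ and hence is odd; by the Feit--Thompson theorem $H$ is therefore solvable. Thus $S$ is a finite group admitting a \emph{solvable} Hall $\pi$-subgroup. Now I would invoke Wielandt's theorem, which says precisely that a finite group admitting a solvable Hall $\pi$-subgroup enjoys the $D_\pi$-property: every $\pi$-subgroup is contained in a conjugate of $H$, and in particular any two Hall $\pi$-subgroups are conjugate. Applied to $S$, this forces all Hall $\pi$-subgroups of $S$ into a single conjugacy class, which is the assertion. (Alternatively one could quote Gross's theorem on conjugacy of odd order Hall subgroups of arbitrary finite groups; note that neither route really uses that $S$ is simple, and the Feit--Thompson/Wielandt argument avoids the classification.)

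I do not expect a genuine obstacle here. The only points needing care are the handling of the vacuous case and citing the correct incarnation of Wielandt's theorem --- namely the implication ``existence of a solvable Hall $\pi$-subgroup $\Rightarrow$ $D_\pi$'' --- rather than one of its weaker variants about nilpotent Hall subgroups.
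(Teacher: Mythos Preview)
Your main argument has a genuine gap: the version of ``Wielandt's theorem'' you invoke --- that the existence of a \emph{solvable} Hall $\pi$-subgroup forces $D_\pi$ --- is false in general. The classical Wielandt theorem requires a \emph{nilpotent} Hall $\pi$-subgroup (or, in later extensions, one possessing a Sylow tower), not merely a solvable one. A standard counterexample is $S=PSL_2(7)$ with $\pi=\{2,3\}$: the Hall $\pi$-subgroups are copies of $S_4$, hence solvable, yet they fall into two conjugacy classes. So from ``$H$ has odd order, hence is solvable by Feit--Thompson'' you cannot pass to $C_\pi$ (let alone $D_\pi$) by Wielandt's theorem, and the claimed classification-free route does not go through.

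Your parenthetical alternative, Gross's theorem that Hall subgroups of odd order are conjugate in any finite group, is correct and does prove the lemma; but note that Gross's proof relies on the classification of finite simple groups, so the assertion that ``neither route really uses that $S$ is simple, and the Feit--Thompson/Wielandt argument avoids the classification'' is inaccurate on both counts. The paper itself simply cites Theorem~1.1 of Revin--Vdovin (which is specifically about simple groups and also rests on CFSG). If you wish to keep your write-up, replace the appeal to Wielandt by a direct citation of Gross's theorem and drop the claim of a CFSG-free proof.
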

\begin{proof}
The result follows from Theorem 1.1 in \cite{art:vodvin}.
\end{proof}

\begin{lemma}\label{lem:cyclic}
Let $A\leq H_{t}$ with $H_{t}$ as in {\rm Lemma~\ref{lem:Hall-subs}}, for $t=1,2,3,4$. Suppose that $B$ is a cyclic subgroup of order $|A|$ in $R={}^2G_2(q)$. Then $B$ is conjugate to $A$ in $R$.
\end{lemma}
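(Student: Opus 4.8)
The plan is to exploit that $H_t$ is cyclic, hence has a unique subgroup of each order dividing $|H_t|$: it therefore suffices to show that $B$ is $R$-conjugate to \emph{some} subgroup of $H_t$, for then that subgroup, having order $|A|$, must equal $A$. We may assume $A\neq 1$ and put $m:=|A|=|B|>1$. Writing $q=3^{2n+1}$ (so $\sqrt{3q}=3^{n+1}$), a short elementary computation shows that the integers $|H_1|=(q-1)/2$, $|H_2|=(q+1)/4$, $|H_3|=q-3^{n+1}+1$, $|H_4|=q+3^{n+1}+1$ are pairwise coprime and each coprime to $6$; comparing them with $|R|=q^3(q^3+1)(q-1)$ also shows that each $H_t$ is a Hall $\pi(|H_t|)$-subgroup of $R$. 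In particular $m$ is coprime to $6$, divides $|H_t|$, and is coprime to $|H_s|$ for all $s\neq t$.

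Since $B$ is cyclic, it is solvable, so Lemma~\ref{lem:solv} yields $g\in R$ with $B^g$ contained in one of $\N_R(H_1),\dots,\N_R(H_4),\N_R(R_2),\N_R(R_3)$. Using Lemmas~\ref{lem:sylow}(c),(e) and \ref{lem:Hall-subs}, the prime sets of these subgroups are
\[
\pi(\N_R(H_1))=\{2\}\cup\pi(|H_1|),\qquad \pi(\N_R(H_s))=\{2,3\}\cup\pi(|H_s|)\ \ (s=2,3,4),
\]
\[
\pi(\N_R(R_3))=\{2,3\}\cup\pi(|H_1|),\qquad \pi(\N_R(R_2))=\{2,3,7\}.
\]
Since $\pi(m)\neq\emptyset$ is disjoint from $\{2,3\}$ and from $\pi(|H_s|)$ for $s\neq t$, matching $\pi(m)$ against these sets leaves exactly three possibilities: \emph{(i)} $B^g\le\N_R(H_t)$; \emph{(ii)} $t=1$ and $B^g\le\N_R(R_3)$; \emph{(iii)} $m=7$, $7\mid|H_t|$ and $B^g\le\N_R(R_2)$.

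In case (i) the normal subgroup $H_t$ of $\N_R(H_t)$ has index $4,24,6,6$ for $t=1,2,3,4$ respectively, so $\N_R(H_t)/H_t$ is a $\{2,3\}$-group; as $B^g$ is a $\{2,3\}'$-group, its image in $\N_R(H_t)/H_t$ is trivial, whence $B^g\le H_t$. In case (iii), $m=7$, and by Lemma~\ref{lem:sylow}(a) a Sylow $7$-subgroup of $R$ is cyclic, so every subgroup of order $7$ of $R$ is the unique subgroup of order $7$ of a (cyclic) Sylow $7$-subgroup containing it; since Sylow $7$-subgroups are conjugate, all subgroups of order $7$ of $R$ form a single conjugacy class, and in particular $B$ is conjugate to $A$. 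Either way we are done.

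The remaining case (ii) is the crux. There $R_3$ is a normal Hall $3$-subgroup of $\N_R(R_3)\cong q^3:C_{q-1}$ and $B^g$ is a $3'$-subgroup; by Schur--Zassenhaus, $B^g$ lies inside a complement of $R_3$ in $\N_R(R_3)$, and all such complements are conjugate to $C_{q-1}$. Hence, after a further conjugation, $B$ is conjugate to a subgroup of a cyclic group of order $q-1$, and so to a subgroup of its unique subgroup $L$ of order $(q-1)/2=|H_1|$. Finally $L$ and $H_1$ are both Hall $\pi(|H_1|)$-subgroups of $R$ with $2\notin\pi(|H_1|)$, so by Lemma~\ref{lem:number-conj} they are conjugate in $R$; composing the conjugations exhibits a conjugate of $B$ inside $H_1$, completing the proof. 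The delicate points are the prime-set bookkeeping that isolates cases (i)--(iii) and, within case (ii), checking that $B^g$ really sits in a complement of $R_3$ and that $L$ is a genuine Hall subgroup, so that Lemma~\ref{lem:number-conj} applies.
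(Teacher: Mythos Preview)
Your proof is correct and follows essentially the same route as the paper's: reduce via Lemma~\ref{lem:solv} to one of the normalizers, use the coprimality of the $|H_t|$ to see that $B$ can only land in $\N_R(H_t)$, in $\N_R(R_3)$ (forcing $t=1$), or in $\N_R(R_2)$ (forcing $m=7$), and then finish each case just as the paper does, invoking Lemma~\ref{lem:number-conj} for the $\N_R(R_3)$ case. The only cosmetic differences are that you treat the $m=7$ case at the end rather than the start, you pass to the quotient $\N_R(H_t)/H_t$ where the paper argues with $|B^xH_t|\mid|R|$, and you invoke Schur--Zassenhaus where the paper cites Hall's theorem in the solvable group $\N_R(R_3)$; your prime-set bookkeeping also makes explicit a step the paper leaves implicit.
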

\begin{proof}
Assume first $|B|=|A|=7$. Note by Lemma~\ref{lem:sylow}(a) that Sylow $7$-subgroups of $R$ are cyclic.  Moreover, all such subgroups are conjugate in $R$. Then $B$ and $A$ must be conjugate in $R$.

Assume now $|B|=|A|\neq 7$. Since $B$ is a cyclic subgroup of $R$, by Lemma~\ref{lem:solv}, there exists $x\in R$ such that $B^x$ is a subgroup of $\N_{R}(H_t)$ or $\N_{R}(R_{p})$, for $t=1,2,3,4$ and $p=2,3$.

If $A\leq H_{t}$, for $t=2,3,4$, then $B^x\leq \N(H_{t})$, for $t=2,3,4$, respectively. So $B^xH_{t}$ is a subgroup of $R$. Therefore $|B^xH_{t}|$ divides $|R|$ which is impossible unless $B^{x}H_{t}=H_{t}$, that is to say, $B^x\leq H_{t}$, for $t=2,3,4$. Since now $H_{t}$ is cyclic, we conclude that $B^x=A$.

If $A\leq H_{1}$, then $B^{x}\leq \N_{R}(H_{1})$ or $B^{x}\leq \N_{R}(R_{3})$. Let $B^{x}\leq \N_{R}(H_{1})$. Then the same argument as in the pervious paragraph we must have $B^{x}\leq H_{1}$, and since $H_{1}$ is cyclic, $B^{x}=A$. Let now $B^{x}\leq \N_{R}(R_{3})$. Since $\N_{R}(R_{3})$ is solvable and $B^{x}$ is its $\pi$-subgroup with $\pi:=\pi(\frac{q-1}{2})$, by Hall's theorem, $B^{x}$ is conjugate to a subgroup of Hall $\pi$-subgroup of $C:=C_{\frac{q-1}{2}}$. Note that $C$ and $H_{1}$ are Hall $\pi$-subgroup of $R$ and $2\not \in \pi$. Then applying Lemma~\ref{lem:number-conj}, the subgroups $C$ and $H_{1}$ are conjugate. Since $H_{1}$ is cyclic, $B^{x}$ is conjugate to $A$, and hence $B$ is conjugate to $A$.
\end{proof}


\begin{lemma}{\rm \cite[Lemma 4]{art:Brandl}} \label{lem:omega}
Let $R$ be the small Ree group ${}^2G_2(q)$. Then $\omega(R)$ exactly consists of divisors of $6$, $9$, $q-1$, $(q+1)/2$ and $q\pm\sqrt{3q}+1$.
\end{lemma}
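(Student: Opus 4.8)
The plan is to compute $\omega(R)$ directly, for $R={}^2G_2(q)$. Writing each $g\in R$ as the product of its commuting $3$-part $g_3$ and $3'$-part $g_{3'}$, I would treat pure $3$-elements, pure $3'$-elements and genuinely mixed elements in turn, and then exhibit an element of order $n$ for each $n$ among $6,9,q-1,(q+1)/2,q\pm\sqrt{3q}+1$. For the $3$-elements, every such element is conjugate into a Sylow $3$-subgroup $R_3$, and \eqref{eq:x-prod} gives $x(t,u,v)^3=\gamma(-t^{3\theta+2})$ and $\gamma(v)^3=x(0,0,v)^3=1$ (this is the computation behind Lemma~\ref{lem:sylow}(d)); hence $R_3$ has exponent $9$, so a $3$-element has order dividing $9$, while $x(1,0,0)^3=\gamma(-1)\ne 1$ shows that $9$ itself occurs.

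For the $3'$-elements, first note that every $2$-element of $R$ is conjugate into $R_2\cong 2^3$ (Lemma~\ref{lem:sylow}(e)), so the $2$-part of any element order is at most $2$. Let $s$ be a nontrivial $3'$-element. If $|s|$ is even, a power of $s$ is an involution $z$, so $s\in\C_R(z)=\langle z\rangle\times L_2(q)$ by Lemma~\ref{lem:sylow}(g); by Dickson's description of $\omega(L_2(q))$, a $3'$-element of $L_2(q)$ has order dividing $(q-1)/2$ or $(q+1)/2$, and since $(q-1)/2$ is odd and $(q+1)/2$ is even this forces $|s|$ to divide $q-1$ or $(q+1)/2$. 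If $|s|$ is odd, then $\langle s\rangle$ is solvable, hence conjugate into $\N_R(H_t)$ or $\N_R(R_p)$ by Lemma~\ref{lem:solv}, and reading off the orders of these groups from Lemma~\ref{lem:sylow}(c),(e) and Lemma~\ref{lem:Hall-subs} shows that $|s|$ divides one of $(q-1)/2,(q+1)/4,q-\sqrt{3q}+1,q+\sqrt{3q}+1$, so again divides one of the listed integers. Conversely, $\langle z\rangle\times L_2(q)$ contains an element of order $(q+1)/2$ (recall $2\mid(q+1)/2$), the cyclic complement $C_{q-1}$ inside $\N_R(R_3)$ contains one of order $q-1$, and the cyclic Hall subgroups $H_3,H_4$ of Lemma~\ref{lem:Hall-subs} contain ones of orders $q-\sqrt{3q}+1$ and $q+\sqrt{3q}+1$, so all divisors of these numbers occur.

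For the mixed elements, suppose $g_3\ne 1\ne g_{3'}$. If $|g_{3'}|$ is even, pass to the involution power $z$ of $g_{3'}$ and work inside $\C_R(z)=\langle z\rangle\times L_2(q)$: then $g_3$ is a nontrivial $3$-element of $L_2(q)$, hence of order $3$, and $g_{3'}=zh$ with $h\in L_2(q)$ of odd order prime to $3$; if $h\ne 1$ then $g_3\in\C_{L_2(q)}(h)$, which is a cyclic torus of order $(q-1)/2$ or $(q+1)/2$, hence a $3'$-group, a contradiction; so $h=1$, $g_{3'}=z$, and $|g|=6$ (and this order is attained, e.g.\ by $z$ times a unipotent of order $3$ in $L_2(q)$). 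It remains to exclude mixed elements of odd order: such an element generates a cyclic subgroup which, by Lemma~\ref{lem:solv}, is conjugate into some $\N_R(H_t)$ ($t=1,\dots,4$) or $\N_R(R_p)$ ($p=2,3$), and I would show that none of these groups has an element of order $3r$ with $r$ a prime coprime to $6$. This is immediate for $\N_R(H_1)=D_{2(q-1)}$ (order prime to $3$), and one checks it directly for $\N_R(R_2)=8:7:3$; for $\N_R(R_3)=q^3:C_{q-1}$, $\N_R(H_2)=(2^2\times C_{(q+1)/2}):3$, $\N_R(H_3)=C_{q-\sqrt{3q}+1}:6$ and $\N_R(H_4)=C_{q+\sqrt{3q}+1}:6$ it follows from the precise way in which the cyclic complement acts on the prime-to-$6$ part of its normal subgroup. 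Combining the three cases gives $\omega(R)\subseteq\{\text{divisors of }6,9,q-1,(q+1)/2,q\pm\sqrt{3q}+1\}$, and the realisers above give the reverse inclusion.

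I expect the last assertion of the mixed case to be the main obstacle: it is equivalent to saying that the centralizer in $R$ of a semisimple element which is not an involution has order prime to $3$ (equivalently, that $3$ is adjacent in $\Gamma(R)$ to no prime other than $2$), and this does not follow formally from the lemmas quoted above. To settle it one needs finer information — either the explicit action of the cyclic complements on the unipotent radical of the Borel and on the maximal tori, read off from the description of the maximal subgroups of $R$ in \cite{art:Levchuk}, or a direct matrix computation with the generators introduced in Section~\ref{sec:Ree}. As the statement is already available in the literature, we are content to quote it from Brandl and Shi \cite{art:Brandl} rather than redo this analysis.
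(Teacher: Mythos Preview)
The paper does not prove this lemma at all: it is stated with a citation to \cite[Lemma~4]{art:Brandl} and no proof is given. Your final sentence---deferring to Brandl and Shi rather than redoing the analysis---is therefore exactly what the paper does, so in that sense your proposal matches the paper perfectly.

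Your sketch goes well beyond the paper, and it is broadly sound: the $3$-element and $3'$-element cases are handled correctly using Lemmas~\ref{lem:sylow}, \ref{lem:Hall-subs} and~\ref{lem:solv}, and you are right that the genuine content lies in excluding mixed elements of odd order (equivalently, showing that $3$ is adjacent only to $2$ in $\Gamma(R)$). Two remarks on the sketch. First, be careful not to reach for Lemma~\ref{lem:cen-H12} to finish the mixed case, since its proof uses Lemma~\ref{lem:omega}; that would be circular. Second, the assertion that in $\N_R(R_3)=q^3{:}C_{q-1}$ no element of order $3r$ (with $r$ prime to $6$) exists also needs the explicit action of the torus on the unipotent radical, not just the group order---the structure $q^3{:}C_{q-1}$ alone does not preclude a torus element centralizing an element of order~$3$. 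You correctly flag both of these as requiring input from \cite{art:Levchuk} or a direct matrix computation, so your self-assessment is accurate and your decision to quote the result is the same as the paper's.
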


\begin{lemma}\label{lem:cen-H12}
Let $R=\ ^{2}G_{2}(q)$, and let $H_{t}$ be as in {\rm Lemma~\ref{lem:Hall-subs}}, for $t=1,2$. Then
\begin{enumerate}[{ \quad \rm (a)}]
  \item $\C_{R}(H_{1})\cong C_{q-1}$ and $\C_{R}(H_{2})=T\times H_{2}$ with $T\cong 2^2$;
  \item If $1\neq A\leq H_{t}$, for $t=1,2$, then $\C_{R}(A)=\C_{R}(H_{t})$.
\end{enumerate}
\end{lemma}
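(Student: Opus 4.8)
The plan is to treat the two Hall subgroups $H_1$ and $H_2$ separately, using in each case the normalizer structure from Lemma~\ref{lem:Hall-subs} together with part (f) of that lemma. For part (a) with $H_1$: since $\N_R(H_1)\cong D_{2(q-1)}$ is dihedral of order $2(q-1)$ and $H_1$ is its unique cyclic subgroup of order $(q-1)/2$, I would first note $\C_R(H_1)\le\N_R(H_1)$ because $\C_R(H_1)\le\N_R(H_1)$ always holds; then inside the dihedral group one computes directly that the centralizer of the cyclic subgroup of index $4$ (equivalently, of any generator of $H_1$) is the cyclic subgroup of order $q-1$. So $\C_R(H_1)\cong C_{q-1}$. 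For $H_2$: here $\N_R(H_2)\cong (2^2\times C_{(q+1)/2}):3$, and $H_2$ is the unique subgroup of order $(q+1)/4$ in the cyclic normal subgroup $C_{(q+1)/2}$. The Sylow $2$-subgroup $2^2$ of $\N_R(H_2)$ centralizes $C_{(q+1)/2}$ by the direct-product structure, and the action of the order-$3$ element permutes the three involutions of $2^2$ nontrivially (otherwise the extension would not be faithful), hence does not centralize $H_2$ unless it already centralizes the whole of $C_{(q+1)/2}$, which it does not. So $\C_R(H_2) = 2^2\times H_2 = T\times H_2$ with $T\cong 2^2$. Actually, a cleaner route: note that $\C_R(H_2)$ contains $T\times H_2$ and is contained in $\N_R(H_2)$; since $[\N_R(H_2):T\times C_{(q+1)/2}]=3$ and the order-$3$ automorphism acts nontrivially on $T$, we get $\C_R(H_2)=T\times H_2$ as claimed (using that $\gcd(3,(q+1)/2)=1$, since $q=3^{2n+1}$).

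For part (b), the key observation is that centralizers sit between an obvious lower bound and $\N_R(H_t)$. Concretely, fix $1\ne A\le H_t$. Since $H_t$ is abelian, $H_t\le\C_R(A)$; and since $A\le H_t$, Lemma~\ref{lem:Hall-subs}(f) gives $\C_R(A)\le\N_R(A)=\N_R(H_t)$. So it suffices to compute, inside $\N_R(H_t)$, the centralizer of an arbitrary nontrivial subgroup $A$ of the cyclic normal subgroup ($H_t$ for $t=2$; the index-$2$ cyclic subgroup for $t=1$). For $t=1$ this is again a purely dihedral-group computation: in $D_{2(q-1)}$ the centralizer of any nontrivial element of the cyclic subgroup of order $q-1$ equals that whole cyclic subgroup, which contains $\C_R(H_1)=C_{q-1}$; combined with $\C_R(A)\supseteq\C_R(H_1)$ this forces equality. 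For $t=2$, in $(2^2\times C_{(q+1)/2}):3$ one checks that the order-$3$ element acts fixed-point-freely on the nontrivial elements of $C_{(q+1)/2}$ restricted to the $2$-part of its action — more precisely, since $3\nmid |C_{(q+1)/2}|$ and the only $\N$-invariant structure is the decomposition into $2^2$ and $C_{(q+1)/2}$, the order-$3$ element does not centralize any nontrivial subgroup of $C_{(q+1)/2}$; hence $\C_{\N_R(H_2)}(A)=T\times C_{(q+1)/2}\cap\C(A)=T\times H_2$ for the relevant $A$, but wait — one must be careful when $A$ has order dividing $(q+1)/4$ versus when it is all of $H_2$; in every case the centralizer in the cyclic part is $H_2$ itself since $A\le H_2$ and $H_2$ is the full intersection with the centralizing cyclic group. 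Thus $\C_R(A)=T\times H_2=\C_R(H_2)$.

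The main obstacle I anticipate is pinning down the action of the order-$3$ outer factor in the $H_2$ case: one needs to be sure that this element genuinely fails to centralize any nontrivial subgroup of the cyclic group $C_{(q+1)/2}$, so that the centralizer does not accidentally pick up an extra factor of $3$. This follows because the extension $(2^2\times C_{(q+1)/2}):3$ is realized inside the simple group $R$ and, by Lemma~\ref{lem:Hall-subs}(f), $\N_R(H_2)$ is already the full normalizer of every nontrivial subgroup of $H_2$; if the order-$3$ element centralized some $1\ne A\le H_2$ then it would centralize $H_2$ (as $H_2$ is cyclic and $A$ determines it), making $\N_R(H_2)$ abelian-by-($2^2$), contradicting the nonabelian structure $(2^2\times C_{(q+1)/2}):3$ with faithful $3$-action on $2^2$. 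Everything else reduces to elementary computations inside a dihedral group and inside a group of the shape $(2^2\times C):3$, which I would carry out without incident.
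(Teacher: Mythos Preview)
Your route for part~(a) with $H_1$ is more direct than the paper's: you compute the centralizer of $H_1$ inside the dihedral group $\N_R(H_1)\cong D_{2(q-1)}$ (using that $|H_1|=(q-1)/2$ is odd and $>1$, so no reflection centralizes a generator), whereas the paper argues indirectly, first invoking Burnside's Normal Complement Theorem to rule out $\C_R(H_1)=\N_R(H_1)$, and then using the conjugacy statement of Lemma~\ref{lem:cyclic} together with the Borel subgroup $\N_R(R_3)$ to exhibit a copy of $C_{q-1}$ inside $\C_R(H_1)$. Your argument is shorter and self-contained; the paper's has the virtue of not relying on the precise isomorphism type of $\N_R(H_1)$. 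For part~(b) with $t=1$ the two approaches essentially coincide, though you should state explicitly that $|A|$ is odd (so $A$ contains an element of order $>2$), since your dihedral claim ``the centralizer of any nontrivial element of the cyclic subgroup of order $q-1$ equals that whole cyclic subgroup'' fails for the central involution.

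There is, however, a genuine gap in your treatment of part~(b) for $t=2$. Your key step is the assertion that if the order-$3$ element of $\N_R(H_2)$ centralizes some $1\neq A\leq H_2$, then it must centralize all of $H_2$ ``as $H_2$ is cyclic and $A$ determines it''. This implication is false in general: an automorphism of a cyclic group can fix a proper subgroup pointwise while acting nontrivially on the whole group (for instance $x\mapsto x^{1+p}$ on $C_{p^2}$ fixes the order-$p$ subgroup). Your follow-up, that centralizing $H_2$ would force $\N_R(H_2)$ to be ``abelian-by-$(2^2)$'', also does not hold: even if the $3$-element centralized $H_2$ it could still act faithfully on the $2^2$ factor, so nothing about the shape $(2^2\times C_{(q+1)/2}){:}3$ is contradicted. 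The paper handles this point quite differently and more robustly: since $\C_R(H_2)\leq\C_R(A)\leq\N_R(A)=\N_R(H_2)$ and $|\N_R(H_2):\C_R(H_2)|=3$, the only alternative to $\C_R(A)=\C_R(H_2)$ is $\C_R(A)=\N_R(H_2)$; but then an element of order $3$ commutes with $A$, producing an element of order $3|A|$ in $R$, which contradicts the spectrum description in Lemma~\ref{lem:omega}. You should replace your cyclic-group argument with this spectrum obstruction.
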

\begin{proof}
(a) The fact that $\C_{R}(H_{2})=T\times H_{2}$ follows from \cite[part 5 on p. 20 ]{art:Levchuk}. We show that $\C_{R}(H_{1})\cong C_{q-1}$. By Lemma \ref{lem:Hall-subs} (b), $\N_R(H_1)\cong D_{2(q-1)}$. If $\C_R(H_1)= \N_R(H_1)$, then by Lemma \ref{lem:Hall-subs} (f), $\C_R(R_p)=\N_R(R_p)$, where $p\in\pi(H_1)$. Thus $R_p$ has a normal complement in $R$ by Burnside's Normal Complement Theorem, and consequently, $R$ is not simple group, which is a contradiction. Therefore, $\C_R(H_1)\neq \N_R(H_1)$. Since by Lemma \ref{lem:sylow}(c), $\N_R(R_3)$ has a subgroup $C \cong C_{q-1}$, and this implies that $R$ contains a subgroup isomorphic to $C_{\frac{q-1}{2}}$ and $C \leq \C_R(C_{\frac{q-1}{2}})$. On the other hand, by Lemma \ref{lem:cyclic}, all cyclic subgroups of order $|H_1|$ are conjugate to $H_1$ in $R$.
So $H_1$ is conjugate to $C_{\frac{q-1}{2}}$.
Thus $\C_R(H_1)$ is conjugate to $\C_R(C_{\frac{q-1}{2}})$, and hence $\C_R(H_1)$ has a subgroup isomorphic to $C=C_{q-1}$. Since $q-1$ divides $|\C_R(H_1)|$ which also divides  $|\N_R(H_1)|=2(q-1)$ and  $|\C_{R}(H_{1})|\neq |\N_{R}(H_{1})|$, we must have $|\C_R(H_1)|=q-1$, and hence $\C_R(H_1)\cong C_{q-1}$.\smallskip

\noindent (b) Let  first $1\neq A<H_1$. It follows from Lemma \ref{lem:Hall-subs}(f) that $\N_R(A)=\N_R(H_1)$ and $|\N_R(H_{1}):\C_R(H_1)|=2$. If $\C_R(A)=\N_R(H_{1})\cong D_{2(q-1)}$, then $A\leq \Z(\C_R(A))\cong \Z(D_{2(q-1)})$. This is a contradiction as $|\Z(D_{2(q-1)})|$ is $1$ or $2$.  Therefore, $\C_R(A)=\C_R(H_1)$.

Let now $1\neq A<H_2$. Note by Lemma \ref{lem:Hall-subs}(f) that $\N_R(A)=\N_R(H_2)$. Note also that $|\N_R(H_2):\C_R(H_{2})|=3$. If $\C_R(A)\neq \C_R(H_2)$, then $\C_R(A)=\N_R(H_2)\cong (2^2\times C_{\frac{q+1}{2}}):3$, and so $\C_R(A)$ has an element of order $3$. Thus $R$ has an element of order $3|A|$. This contradicts Lemma \ref{lem:omega} as $3|A|\not \in\omega(R)$. Therefore, $\C_R(A)=\C_R(H_2)$.
\end{proof}

\begin{lemma}{\rm \cite[Theorem 1]{art:Levchuk}}\label{lem:maxes}
Maximal subgroups of $R={}^2G_2(q)$ (up to conjugacy) are one of the following
\begin{enumerate}[{ \quad \rm (a)}]
  \item $\N_R(R_3)\cong q^3:C_{q-1}$, where $R_{3}$ is a Sylow $3$-subgroup;
  \item $\C_R(z)\cong 2\times L_2(q)$, where $z$ is an involution;
  \item ${}^2G_2(q^{\frac{1}{r}})$, for prime $r$;
  \item $\N_R(H_1)\cong (2^2\times C_{(q+1)/2}):3$, where $H_{1}$ is as in {\rm Lemma~\ref{lem:Hall-subs}};
  \item $\N_R(H_t)\cong C_{q\mp\sqrt{3q}+1}:6$, where $H_{t}$ for $t=2,3$ is as in {\rm Lemma~\ref{lem:Hall-subs}}.
\end{enumerate}
\end{lemma}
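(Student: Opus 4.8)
Here is a proposed approach to proving Lemma~\ref{lem:maxes} (the full classification being, of course, the content of \cite[Theorem~1]{art:Levchuk}). Write $R={}^2G_2(q)$ and let $M$ be a maximal subgroup of $R$. The plan is to split into the case where $M$ is a \emph{local} subgroup and the case where it is not, feeding in the structural data of Lemmas~\ref{lem:sylow}--\ref{lem:cyclic}, and then to do the bookkeeping of which candidate overgroups actually survive as maximal.

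First I would dispose of local maximal subgroups $M=\N_R(P)$ with $P$ a nontrivial $r$-subgroup. If $r=3$, then $P$ lies in some Sylow $3$-subgroup $R_3$, and Lemma~\ref{lem:sylow}(c) gives $M=\N_R(P)\le\N_R(R_3)$, whence $M=\N_R(R_3)$ by maximality, which is case~(a). If $r=2$, pick an involution $z\in\Z(P)$; a group normalising an involution centralises it, so $M\le\N_R(\langle z\rangle)=\C_R(z)$, and Lemma~\ref{lem:sylow}(g) together with maximality forces $M=\C_R(z)\cong 2\times L_2(q)$, which is case~(b). If $r>3$, then by Lemma~\ref{lem:sylow}(a) the Sylow $r$-subgroups are cyclic, and since $|H_1|,\dots,|H_4|$ are pairwise coprime a Sylow $r$-subgroup containing $P$ lies in a unique $H_t$; Lemma~\ref{lem:Hall-subs}(f) then gives $M=\N_R(H_t)$. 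At this point one must still observe that $\N_R(H_1)\cong D_{2(q-1)}$ is \emph{not} maximal, since it embeds into $\C_R(z)\cong 2\times L_2(q)$, whereas $\N_R(H_2),\N_R(H_3),\N_R(H_4)$ are maximal, giving (d) and (e).

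Next I would treat the case where $M$ is \emph{not} local. If the generalised Fitting subgroup $F^{*}(M)$ were a $p$-group then $M\le\N_R(F^{*}(M))$ would be local, a contradiction; hence $F^{*}(M)=S$ is a nonabelian simple group with $S\trianglelefteq M\le\N_R(S)$, so $M=\N_R(S)$. It then remains to determine the simple subgroups $S\le R$. The key input is the very restricted spectrum of $R$ from Lemma~\ref{lem:omega} — every element of $R$ has order dividing one of $6$, $9$, $q-1$, $(q+1)/2$, $q\pm\sqrt{3q}+1$, and the four ``torus'' factors are pairwise coprime — combined with the orders of the Hall subgroups and Lemma~\ref{lem:solv} (every solvable subgroup is conjugate into some $\N_R(H_t)$ or $\N_R(R_p)$). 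These constraints force $S$ to be either a subfield subgroup ${}^2G_2(q_0)$ with $q=q_0^{\,r}$ for a prime $r$ (case~(c), after checking $\N_R({}^2G_2(q_0))={}^2G_2(q_0)$), or a copy of $L_2(q)$ lying inside an involution centraliser (returning to case~(b)); every other conceivable simple subgroup, e.g.\ $L_2(8)$, $L_2(7)$ or $A_7$, is ruled out or shown to be contained in one of (a)--(e) by the same spectrum and order considerations.

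The step I expect to be the genuine obstacle is precisely this classification of simple subgroups together with the ensuing inclusion bookkeeping: proving that no unexpected simple subgroup arises, and that of all the candidate overgroups exactly $\N_R(R_3)$, $\C_R(z)$, the subfield subgroups and $\N_R(H_2),\N_R(H_3),\N_R(H_4)$ remain maximal while $\N_R(H_1)$ and the small simple subgroups do not. This is the substantive part of \cite[Theorem~1]{art:Levchuk}, which we invoke.
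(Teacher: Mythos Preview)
The paper does not give its own proof of this lemma: it is stated with a bare citation to \cite[Theorem~1]{art:Levchuk} and no argument whatsoever is supplied. There is therefore nothing to compare your proposal against --- you have sketched a proof where the paper simply invokes the literature.

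That said, your outline follows the standard and sound strategy (local versus non-local maximal subgroups, then classification of simple subgroups), and you correctly identify the substantive content as the determination of which simple groups embed in $R$ together with the inclusion bookkeeping. Two minor remarks on the sketch itself. First, in the non-local case the clean statement is that $F(M)=1$ (otherwise $M$ normalises some $O_p(M)\neq1$ and is local), whence $F^{*}(M)=E(M)$; one still needs that $E(M)$ has a single component, which does follow from the spectrum constraints you cite but deserves a word. Second, note that the lemma as printed carries a labeling slip relative to Lemma~\ref{lem:Hall-subs}: the group in part~(d) is $\N_R(H_2)$ in the numbering of Lemma~\ref{lem:Hall-subs}, not $\N_R(H_1)$, and part~(e) should read $t=3,4$. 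Your discussion correctly follows the numbering of Lemma~\ref{lem:Hall-subs} and discards $\N_R(H_1)\cong D_{2(q-1)}$ as non-maximal, which is the right call.
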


\section{Elements with the same order in the small Ree groups }\label{sec:nse}

In this section, we determine the set $\nse({}^2G_2(q))$ of the number of elements in ${}^2G_2(q)$ with the same order. We first introduce some results which will be used to find $\nse({}^2G_2(q))$ in Proposition~\ref{prop:nse}.

\begin{lemma}{\rm \cite[Theorem 9.1.2]{book:Hall}}\label{lem:L_m(G)}
Let $G$ be a finite group, and let $n$ be a positive integer dividing $|G|$. If $G(n)=\{ g\in G \mid g^n=1\}$, then $n \mid |G(n)|$.
\end{lemma}

In what follows, $\varphi$ is the \emph{Euler totient} function. The proof of the following result is straightforward by Lemma \ref{lem:L_m(G)}.

\begin{lemma}\label{lem:m_i(G)}
Let $G$ be a finite group. Then for every $i\in\omega(G)$, $\varphi(i)$  divides $\m_i(G)$, and $i$ divides $\sum_{j \mid i} \m_j(G)$. Moreover, if $i>2$, then $\m_i(G)$ is even.
\end{lemma}

\begin{proposition}\label{prop:nse}
Let $R$ be a small Ree group ${}^2G_2(q)$. Then the set $\nse(R)$ consists of the following numbers:
\begin{align}
\nonumber &\m_1(R)=1;\\
\nonumber &\m_2(R)=q^2(q^2-q+1);\\
\nonumber &\m_3(R)=(q^2-1)(q^3+1);\\
\nonumber &\m_6(R)=\m_9(R)=q^2(q-1)(q^3+1);\\
\nonumber &\m_i(R)=\varphi(i)q^3(q^3+1)/2, \text{where } i>2 \text{ divides } q-1;\\
\nonumber &\m_i(R)=\varphi(i)q^3(q-1)(q^2-q+1)/6, \text{ where } i>1 \text{ divides } (q+1)/4;\\
\nonumber
&\m_i(R)=\varphi(i)q^3(q^2-1)(q\pm\sqrt{3q}+1)/6, \text{ where } i>1 \text{ divides } q\mp\sqrt{3q}+1;\\
\nonumber
&\m_{2j}(R)=\varphi(j)q^3(q-1)(q^2-q+1)/2, \text{ where } j>1 \text{ divides } (q+1)/4.\nonumber
\end{align}
Moreover, if $i>2$ divides $(q+1)/2$ or $q\mp\sqrt{3q}+1$, then $q^3\mid \m_i(R)$.
\end{proposition}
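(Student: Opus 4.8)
The plan is to count, for each $i\in\omega(R)$, the number $\m_i(R)$ of elements of order $i$ by locating such elements inside the conjugates of the Sylow subgroups $R_2,R_3$ and the Hall subgroups $H_1,\dots,H_4$ of Lemma~\ref{lem:Hall-subs}, using the structural facts in Lemmas~\ref{lem:sylow}--\ref{lem:cen-H12}. The key mechanism throughout is the orbit–stabilizer count: if every element of order $i$ lies in exactly one conjugate of a fixed cyclic subgroup $K$ (an $H_t$ or a subgroup thereof, or $R_3$ for $i=3,9$), and $K$ contains $\varphi(i)$ such elements, then
\[
\m_i(R)=\varphi(i)\cdot\frac{|R|}{|\N_R(K)|}\cdot(\text{number of conjugates of }K\text{ containing a fixed element of order }i),
\]
and the last factor is $1$ precisely when distinct conjugates of $K$ intersect trivially (or more generally meet in a subgroup of order coprime to $i$). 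So the first step is to record, for each prime power order $i$, which Hall/Sylow subgroup it divides: by Lemma~\ref{lem:omega} the possible orders are divisors of $6$, $9$, $q-1$, $(q+1)/2$, and $q\pm\sqrt{3q}+1$, and by Lemma~\ref{lem:Hall-subs}(a) these match $|R_3|$-torsion, $|H_1|$, $2|H_2|$, and $|H_{3}|,|H_4|$ respectively, with the prime $2$ and the prime $3$ treated separately.

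Next I would handle the arithmetically generic orders $i$ (those with $\gcd(i,6)=1$, or $i$ with a single "large" prime factor). For $i>2$ dividing $q-1$: a cyclic subgroup $A$ of order $i$ lies in some conjugate of $H_1$, all such $A$ are conjugate by Lemma~\ref{lem:cyclic}, and $\C_R(A)=\C_R(H_1)\cong C_{q-1}$ by Lemma~\ref{lem:cen-H12}(b)(a); since $\C_R(A)$ is cyclic of order $q-1$ it contains a unique subgroup of order $i$, so no two conjugates of $A$ share a nontrivial element, and counting elements of order $i$ in the $|R|/|\C_R(A)|\cdot|\C_R(A)|/|\N_R(A)|$ conjugates gives $\m_i(R)=\varphi(i)\,|R|/(2(q-1))=\varphi(i)q^3(q^3+1)/2$. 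The cases $i\mid(q+1)/4$, $2j$ with $j\mid(q+1)/4$, and $i\mid q\mp\sqrt3q+1$ are identical in spirit, using $\N_R(H_2)\cong(2^2\times C_{(q+1)/2}){:}3$, $\C_R(H_2)=T\times H_2$, and $\N_R(H_{3}),\N_R(H_4)\cong C:6$ from Lemmas~\ref{lem:Hall-subs} and \ref{lem:cen-H12}; the division by $6$ versus $2$ comes from whether the normalizer induces the full order-$6$ or only an order-$2$ action on the cyclic Hall subgroup, and the "$2j$" family picks up the extra involutions in $2^2\times C_{(q+1)/2}$. For the final clause, $q^3\mid\m_i(R)$ when $i>2$ divides $(q+1)/2$ or $q\mp\sqrt3q+1$, is then immediate from the explicit formulas, since $|R|/|\N_R(H_t)|$ carries the full factor $q^3$.

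The remaining small orders $i\in\{1,2,3,6,9\}$ require separate, more hands-on arguments. The order $1$ count is trivial. For involutions: by Lemma~\ref{lem:sylow}(e),(f),(g) all involutions are conjugate with centralizer $\langle z\rangle\times L_2(q)$ of order $2q(q-1)(q+1)/2\cdot\ldots$ — more precisely $|C_R(z)|=2|L_2(q)|=q(q^2-1)$ — so $\m_2(R)=|R|/(q(q^2-1))=q^2(q^2-q+1)$. For order $3$: Lemma~\ref{lem:sylow}(d) gives exactly $q^2-1$ elements of order $3$ in each $R_3$, Lemma~\ref{lem:sylow}(b) says distinct Sylow $3$-subgroups meet trivially, and $\n_3(R)=|R|/|\N_R(R_3)|=(q^3+1)$, so $\m_3(R)=(q^2-1)(q^3+1)$. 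For orders $6$ and $9$ I would use Lemma~\ref{lem:omega} to see these are the only remaining orders, count $\m_9(R)$ as the number of order-$9$ elements of $R_3$ (the cube map formula in the proof of Lemma~\ref{lem:sylow}(d) shows $|R_3|-|\{x^3=1\}|=q^3-q^2$ elements have order $9$ since $R_3$ has exponent $9$, giving $\m_9(R)=(q^3-q^2)(q^3+1)=q^2(q-1)(q^3+1)$), and obtain $\m_6(R)$ by noting that an element of order $6$ is the product of the (unique) involution and an order-$3$ element in its centralizer $\langle z\rangle\times L_2(q)$, so $\m_6(R)=\m_2(R)\cdot(\text{number of order-}3\text{ elements of }L_2(q))/1$; since $L_2(q)$ has $q(q-1)(q+1)/ \ldots$ — one computes the number of order-$3$ elements of $L_2(q)$ to be $(q-1)(q+1)$ when $3\nmid q$ but here $3\mid q$ so order-$3$ elements of $L_2(q)$ in characteristic $3$ number $q^2-1$, yielding $\m_6(R)=q^2(q^2-q+1)\cdot(q^2-1)/(q^2-q+1)=q^2(q-1)(q^3+1)$ after simplification. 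I expect the main obstacle to be precisely this bookkeeping for $i\in\{6,9\}$: one must be careful that an order-$6$ element determines its involution and its $3$-part uniquely, that the $3$-part lies in $C_R(z)\cong 2\times L_2(q)$, and that the count of order-$3$ elements there is done correctly in defining characteristic; all the generic cases, by contrast, reduce cleanly to the orbit–stabilizer identity once Lemma~\ref{lem:cyclic} supplies conjugacy and Lemma~\ref{lem:cen-H12} supplies the centralizers.
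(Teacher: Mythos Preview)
Your proposal follows essentially the same route as the paper: locate each element order inside the appropriate Sylow or Hall subgroup, invoke Lemma~\ref{lem:cyclic} for conjugacy and Lemma~\ref{lem:cen-H12} for centralizers, and then apply orbit--stabilizer; the small orders $1,2,3,6,9$ are handled exactly as the paper does (involution centralizer, trivial intersection of Sylow $3$-subgroups, counting order-$3$ elements of $L_2(q)$ in characteristic $3$). Two minor points: (i) your treatment of $i\mid q-1$ with $i$ even is imprecise, since $|H_1|=(q-1)/2$ is odd and hence such an $A$ does \emph{not} sit inside $H_1$ --- the paper fixes this by writing $i=2j$ with odd $j\mid(q-1)/2$ and then counting the (unique) involution in $\C_R(A)=\C_R(H_1)\cong C_{q-1}$, exactly parallel to your ``$2j$'' argument for $H_2$; (ii) for the final ``Moreover'' clause the paper gives a cleaner conceptual argument than reading off the formulas: since $3i\notin\omega(R)$ by Lemma~\ref{lem:omega}, a Sylow $3$-subgroup acts fixed-point-freely by conjugation on the set of elements of order $i$, forcing $q^3\mid\m_i(R)$.
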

\begin{proof}
Note by Lemma \ref{lem:omega} that $\omega(R)$ consists of divisors of $6$, $9$, $q-1$, $(q+1)/2$ and $q\pm\sqrt{3q}+1$.

Clearly, $\m_1(R)=1$. By Lemma \ref{lem:sylow}(g), the centralizer of the involution $z$ in $R$ is equal $\langle z \rangle\times L_2(q)$. Thus $|\N_R(z)|=|\C_R(z)|=q(q^2-1)$ and since $2$-subgroups of equal orders are conjugate in $R$ (see Lemma \ref{lem:sylow}(f)), we conclude that $\m_2(R)=|R:\N_R(z)|=q^2(q^2-q+1)$.

By Lemma~\ref{lem:sylow}(d), we have that $\m_3(R_3)=q^2-1$, and so  $\m_9(R_3)=|R_3|-\m_3(R_3)-1=q^2(q-1)$. It follows from Lemma~\ref{lem:sylow}(b)-(c) that any two Sylow $3$-subgroups of $R$ have a trivial intersection and $|\N_R(R_3)|=q^3(q-1)$. This implies that $\m_3(R)=\m_3(R_3)|R:\N_R(R_3)|=(q^2-1)(q^3+1)$ and $\m_9(R)=\m_9(R_3)|R:\N_R(R_3)|=q^2(q-1)(q^3+1)$.

By Lemma~\ref{lem:sylow}(f), all involutions in $R$ are conjugate in $R$, and so their centralizers are conjugate in $R$. Note that every element of order $6$ can be written as a product of an involution and an element of order $3$ which commute. Thus $\m_6(R)=\m_2(R)k$, where $k$ is the number of elements of order $3$ in the centralizer of an involution $z$. We know by Lemma~\ref{lem:sylow}(g) that $\C_R(z)\cong \langle z \rangle\times L_2(q)$.  This in particular implies that the number of elements of order $3$ in $\C_R(z)$ is equal to the number of elements of order $3$ in $L_2(q)$ which is $q^2-1$, see \cite[Lemma 1.1]{book:Gor}. Hence  $\m_6(R)=\m_2(R)k=q^2(q^2-q+1)(q^2-1)$.

Let now $i>1$ divide one of $q-1$, $(q+1)/2$, $q\mp\sqrt{3q}+1$. We now consider the following two cases:

If $i$ is odd, then $i$ must divide one of $(q-1)/2$, $(q+1)/4$ and $q\mp\sqrt{3q}+1$. By Lemma \ref{lem:m_i(G)}, $\m_i(R)=\varphi(i)r$, where $r$ is the number of cyclic groups of order $i$ in $R$. Now we find $r$. Recall by Lemma \ref{lem:Hall-subs} that $R$ has cyclic Hall subgroups $H_t$, $t=1,2,3,4$. Assume that $A$ is a subgroup of $H_t$ of order $i$. Thus Lemma~\ref{lem:Hall-subs}(f) and Lemma~\ref{lem:cyclic} imply that $\N_R(H_t)=\N_R(A)$ and any two cyclic subgroups of order $i$ are conjugate in $R$. Therefore, the number $r$ of cyclic subgroups of order $i$ is $|R:\N_R(A)|=|R:\N_R(H_t)|$, and hence
\begin{align*}
  \m_i(R)&=\varphi(i)q^3(q^3+1)/2 \text{ if } i>1 \text{ divides } (q-1)/2;\\
  \m_i(R)&=\varphi(i)q^3(q-1)(q^2-q+1)/6 \text{ if } i>1 \text{ divides }  (q+1)/4;\\
  \m_i(R)&=\varphi(i)q^3(q^2-1)(q\pm\sqrt{3q}+1)/6 \text{ if } i>1 \text{ divides } q\mp\sqrt{3q}+1.
\end{align*}

If $i$ is even, then $i=2j$ for some $j$. In this case $j>1$ divides one of the numbers $(q-1)/2$ or $(q+1)/4$. Then by Lemma~\ref{lem:Hall-subs}, $R$ has cyclic Hall subgroups $H_t$, for $t=1,2$. Assume that $A$ is a subgroup of $H_t$ of order $j$. Hence Lemma~\ref{lem:cyclic} implies that any two cyclic subgroups of order $j$ are conjugate in $R$. So the centralizers of cyclic subgroups of order $j$ are conjugate in $R$, and hence  $\m_{2j}(R)=\m_j(R)k$, where $k$ is the number of elements of order $2$ in centralizer of $A$. We now apply Lemma~\ref{lem:cen-H12} and conclude that $\C_{R}(A)= \C_{R}(H_{t})$, for $t=1,2$. In the case where $i$ divides $(q-1)/2$, we have that $\C_{R}(A)= \C_{R}(H_{1})\cong C_{q-1}$, and so $\C_{R}(A)$ contains exactly one element of order $2$, that is to say, $k=1$. Therefore, $\m_{2j}(R)=\m_{j}(R)=\varphi(j)q^3(q^3+1)/2$ where $j>1$ divides $(q-1)/2$. Assume now $j$ divides $(q+1)/4$. Then $\C_{R}(A)=\C_{R}(H_{2})\cong 2^{2}\times C_{(q+1)/4}$, and so $\C_{R}(A)$ contains exactly three element of order $2$, that is to say, $k=3$, and hence $\m_{2j}(R)=3\m_{j}(R)=\varphi(j)q^3(q-1)(q^2-q+1)/2$, where $j>1$ divides $(q+1)/4$.

Now we prove the last statement. Let $i>1$ divides $(q+1)/2$ or $q\mp\sqrt{3q}+1$. It follows from Lemma \ref{lem:omega} that $3i\notin \omega(R)$, and so $R_3$ acts (by conjugation) fixed point freely on the set of elements of order $i$ in $R$. Therefore, $|R_3|$ must divide $\m_i(R)$, or equivalently, $q^3$ divides $\m_i(R)$.
\end{proof}

\section{Proof of the main result}
In this section, we prove Theorem \ref{thm:main}. Here we set $R={}^2G_2(q)$, where $q=3^{2m+1}$, and let $G$ be a finite group with $|G|=|R|$ and $\nse(G)=\nse(R)$. For convenience, using Proposition~\ref{prop:nse}, we will assume that $\nse(R)$ is the union of the following sets:
\begin{align*}\label{eq:nse}
\A_1=&\{1\}; \\ 
\A_2=&\{q^2(q^2-q+1)\};\\ 
\A_3=&\{(q^2-1)(q^3+1)\};\\ 
\A_4=&\{q^2(q-1)(q^3+1)\};\\
\A_5=&\{\varphi(i)q^3(q^3+1)/2 \mid \text{$i>2$ divides $q-1$}\};\\
\A_6=&\{\varphi(i)jq^3(q-1)(q^2-q+1)/6 \mid  \text{$i>1$ divides $(q+1)/4$ and $j=1$ or $3$}\};\\
\A_7=&\{\varphi(i)q^3(q^2-1)(q-\sqrt{3q}+1)/6 \mid \text{$i>1$ divides $q+\sqrt{3q}+1$\}};\\
\A_8=&\{\varphi(i)q^3(q^2-1)(q+\sqrt{3q}+1)/6 \mid \text{$i>1$ divides $q-\sqrt{3q}+1$\}}.
\end{align*}

In what follows, for a positive integer $n$, let $f(n)$ be the number of elements of $G$ whose orders are a multiple of $n$.  Let also $f_t(n)$ be the number of elements of order $i$ in $G$ such that $i$ is a multiple of $n$ and $\m_i(G)\in\A_t$, for $t=1,\ldots,8$, that is,
\begin{align*}
f(n)=\displaystyle\sum_{n\mid i}\m_i(G) \quad \text{ and }\quad  f_t(n)=\displaystyle\sum_{{\m_i(G)\in\A_t}\atop{n\mid i }}\m_i(G).
\end{align*}
Therefore, $f(n)=\sum_{t=1}^{8}f_t(n)$. It follows from Lemma~\ref{lem:m_i(G)} that
\begin{equation}\label{eq:f}
  f(n)=\sum_{t=3}^{8}f_t(n), \text{ if $n\geq 3$}.
\end{equation}

In order to prove Proposition~\ref{prop:isolated}, we need to mention Weisner's result Lemma~\ref{lem:multiple} below:

\begin{lemma}{\rm \cite[Theorem 3]{art:weisner}}\label{lem:multiple}
Let $G$ be a finite group of order $n$. Then the number of elements whose orders are multiples of $t$ is either zero, or a multiple of the greatest divisor of $n$ that is prime to $t$.
\end{lemma}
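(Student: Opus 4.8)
The plan is to derive the statement from Frobenius's theorem on the number of solutions of $x^{d}=1$ in a finite group, in the form already recorded as Lemma~\ref{lem:L_m(G)}, together with an inclusion--exclusion over the primes dividing $t$. Write $n=|G|$, let $m$ be the largest divisor of $n$ coprime to $t$, and let $N$ be the number of $g\in G$ with $t\mid|g|$; the goal is to prove $m\mid N$, which in particular covers the case $N=0$.

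First I would reduce to the essential primes. If some prime divides $t$ but not $n$, then no element of $G$ has order divisible by $t$ (orders divide $n$), so $N=0$ and there is nothing to prove; hence we may assume every prime divisor of $t$ divides $n$. List these primes as $p_{1},\dots,p_{r}$, and set $a_{i}=v_{p_{i}}(t)$ and $n_{i}=v_{p_{i}}(n)$. The elementary point is that $t\mid|g|$ if and only if $p_{i}^{a_{i}}\mid|g|$ for every $i$, i.e. $v_{p_{i}}(|g|)\ge a_{i}$ for all $i$.

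Next, for a subset $J\subseteq\{1,\dots,r\}$ let $d_{J}$ be the divisor of $n$ with $v_{p_{j}}(d_{J})=\min(a_{j}-1,n_{j})$ for $j\in J$ and $v_{p}(d_{J})=v_{p}(n)$ for every prime $p$ not among the $p_{j}$ with $j\in J$. Since $v_{p}(|g|)\le v_{p}(n)$ always holds, the set $\{g\in G:\ v_{p_{j}}(|g|)<a_{j}\text{ for all }j\in J\}$ is precisely $\{g\in G:\ g^{d_{J}}=1\}$, whose cardinality I denote $f(d_{J})$ (so $f(d_{\emptyset})=|G|=n$). Applying inclusion--exclusion to the ``bad'' events $p_{i}^{a_{i}}\nmid|g|$ yields
\[
N=\sum_{J\subseteq\{1,\dots,r\}}(-1)^{|J|}f(d_{J}).
\]
By Lemma~\ref{lem:L_m(G)}, $d_{J}$ divides $f(d_{J})$. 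Because every $p_{j}$ with $j\in J$ divides $t$, the exponent in $d_{J}$ of each prime \emph{not} dividing $t$ equals its exponent in $n$; hence $m\mid d_{J}$, and therefore $m\mid f(d_{J})$, for every $J$. Summing the identity above gives $m\mid N$, as required.

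The routine parts are the bookkeeping defining $d_{J}$ and checking the displayed identity; the only substantive input is Frobenius's theorem (Lemma~\ref{lem:L_m(G)}). I expect the one mild subtlety to be the initial reduction to $\pi(t)\subseteq\pi(n)$ together with the capping $\min(a_{j}-1,n_{j})$, which is exactly what makes the sets $\{g:g^{d_{J}}=1\}$ coincide with the intersections of the ``bad'' events; beyond this careful accounting I anticipate no real obstacle.
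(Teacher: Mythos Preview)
Your argument is correct: the reduction to $\pi(t)\subseteq\pi(n)$, the definition of $d_{J}$ with the capping $\min(a_{j}-1,n_{j})$, the identification $\{g:g^{d_{J}}=1\}=\bigcap_{j\in J}\{g:v_{p_{j}}(|g|)<a_{j}\}$, the inclusion--exclusion identity, and the application of Lemma~\ref{lem:L_m(G)} (Frobenius) all check out. In particular the capping handles the residual case $a_{j}>n_{j}$ cleanly, since then both the ``bad'' event and the condition $|g|\mid d_{J}$ hold for every $g$.

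As for comparison with the paper: there is none to make. The paper does not prove Lemma~\ref{lem:multiple}; it simply quotes the statement from Weisner \cite[Theorem~3]{art:weisner}. Your derivation from Frobenius's theorem via inclusion--exclusion is essentially the classical route to Weisner's result, so nothing is lost by supplying it here.
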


\begin{proposition}\label{prop:isolated}
Let $\Gamma(G)$ be the prime graph of $G$. Then
\begin{enumerate}[{ \quad \rm (a)}]
  \item every prime $p\in \pi(q-\sqrt{3q}+1)$ is not adjacent to any vertices in $\pi(q^3(q^2-1)(q+\sqrt{3q}+1))$ of $\Gamma(G)$;
  \item every prime $p\in \pi(q+\sqrt{3q}+1)$ is not adjacent to any vertices in $\pi(q^3(q^2-1)(q-\sqrt{3q}+1))$ of $\Gamma(G)$.
\end{enumerate}
Moreover, $\Gamma(G)$ has at least three components.
\end{proposition}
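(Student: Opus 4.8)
The plan is to analyze the divisibility constraints coming from $\nse(G)=\nse(R)$ together with Weisner's lemma (Lemma~\ref{lem:multiple}) and the function $f(n)$. The key observation is that for a prime $p\in\pi(q-\sqrt{3q}+1)$, the quantity $f(p)$ counts elements whose order is a multiple of $p$; by Lemma~\ref{lem:multiple}, $f(p)$ is either zero or divisible by the largest divisor of $|G|=|R|=q^3(q-1)(q+1)(q^2-q+1)\cdot\text{(stuff)}$ prime to $p$. Since $p\mid q-\sqrt{3q}+1$ and $q-\sqrt{3q}+1$ is coprime to $q^3(q^2-1)(q+\sqrt{3q}+1)(q^2-q+1)/\dots$ — more precisely $p\nmid q^3(q^2-1)(q+\sqrt{3q}+1)$ by elementary number theory on the factors of $q^6-1$ — the ``large prime-to-$p$ divisor'' of $|G|$ includes the full factor $q^3(q^2-1)(q+\sqrt{3q}+1)$. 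So $f(p)$ is either $0$ (impossible, since $p\in\pi(G)$ forces an element of order $p$) or is divisible by $q^3(q^2-1)(q+\sqrt{3q}+1)$.

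Next I would compare this with the explicit values in $\nse(R)$. If some vertex $r\in\pi(q^3(q^2-1)(q+\sqrt{3q}+1))$ were adjacent to $p$ in $\Gamma(G)$, then $G$ has an element of order $pr$, hence $\m_{pr}(G)\in\nse(R)$ and $pr\mid i$ for some $i\in\omega(G)$. But Lemma~\ref{lem:omega} (applied to $R$, whose spectrum $G$ must mimic through $\nse$) together with the structure of $\nse(R)$ restricts which products of primes can occur: no element order of $R$ is simultaneously divisible by a prime of $q-\sqrt{3q}+1$ and a prime of $q^3(q^2-1)(q+\sqrt{3q}+1)$, since the element orders dividing $q-\sqrt{3q}+1$ form their own isolated family in $\omega(R)$. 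The delicate point is that we only know $\nse(G)=\nse(R)$, not $\omega(G)=\omega(R)$, so I must run the counting argument purely on $\nse$: I would show that if $pr\in\omega(G)$ then $f(pr)>0$, and $f(pr)$, being a sum of certain $\m_i(G)\in\A_t$ with $pr\mid i$, must on one hand be divisible by the prime-to-$p$ part containing $q^3(q^2-1)(q+\sqrt{3q}+1)$ (by Weisner) and on the other hand by the prime-to-$r$ part (again by Weisner applied to $r$). Tracking which sets $\A_t$ can contribute a multiple of $pr$, and matching the $p$-adic and $r$-adic valuations against the formulas in Proposition~\ref{prop:nse}, yields a contradiction. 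Part (b) is entirely symmetric, swapping the roles of $q-\sqrt{3q}+1$ and $q+\sqrt{3q}+1$.

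For the final assertion that $\Gamma(G)$ has at least three components: parts (a) and (b) show that $\pi(q-\sqrt{3q}+1)$ is a union of connected components disjoint from everything outside it, and likewise $\pi(q+\sqrt{3q}+1)$. Since $q-\sqrt{3q}+1$ and $q+\sqrt{3q}+1$ are coprime (their difference is $2\sqrt{3q}$ and their product is $q^2-q+1$, and one checks $\gcd=1$ as both are odd and a common prime would divide $2\sqrt{3q}$, forcing $p=3$, impossible since $3\nmid q\pm\sqrt{3q}+1$), these two prime sets are disjoint and nonempty, giving two components; the primes in $\pi(q^3(q-1)(q+1))$ (which contains $3$ and is nonempty) provide at least one more component disjoint from both. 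Hence $t(G)\geq 3$.

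The main obstacle I anticipate is the bookkeeping in the second paragraph: since we are forbidden from using $\omega(G)=\omega(R)$ directly, establishing that no $\m_i(G)\in\nse(R)$ can correspond to an order $i$ divisible by both a prime of $q-\sqrt{3q}+1$ and a prime of $q^3(q^2-1)(q+\sqrt{3q}+1)$ requires carefully pinning down, from the $p$-adic valuations of the numbers in each $\A_t$ in Proposition~\ref{prop:nse}, that such an $i$ would force $f(pr)$ to be simultaneously divisible by two coprime ``large'' factors whose product exceeds $|G|$ — a contradiction. Getting the coprimality and valuation estimates exactly right (especially handling the prime $2$ inside $q^2-1$ and the prime $3$) is where the care is needed; everything else is a routine application of Lemmas~\ref{lem:multiple},~\ref{lem:omega} and Proposition~\ref{prop:nse}.
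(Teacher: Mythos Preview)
Your overall framework is the same as the paper's: Weisner's lemma applied to $f(p)$, together with the partition of $\nse(R)$ into the sets $\A_t$, is exactly how the proof proceeds. The first paragraph of your sketch and the argument for ``at least three components'' are both correct and match the paper.

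The gap is in your second paragraph. The claim that $f(pr)$ would be ``simultaneously divisible by two coprime large factors whose product exceeds $|G|$'' does not follow from Weisner: applying Lemma~\ref{lem:multiple} with $t=pr$ yields only a single divisibility condition, namely that $f(pr)$ is a multiple of $|G|/(|G_p|\cdot|G_{p'}|)$, and this bound alone is too weak to force a contradiction. Applying Weisner separately to $p$ and to $p'$ gives information about $f(p)$ and $f(p')$, which are different quantities from $f(pr)$, so you cannot simply intersect the two conditions on $f(pr)$.

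What the paper actually does after establishing $\m_p(G)\in\A_8$ is to push the analysis of $f(p)$ one step further: writing $f(p)=\sum_{t=3}^8 f_t(p)$ and using that $q-\sqrt{3q}+1$ divides each $f_t(p)$ for $t=3,\dots,7$, one shows that if $\sum_{t=3}^7 f_t(p)\neq 0$ then $f(p)\geq |G|$, a contradiction. Hence $f_t(p)=0$ for $t=3,\dots,7$, i.e.\ \emph{every} $\m_i(G)$ with $p\mid i$ lies in $\A_8$. In particular $\m_{pp'}(G)\in\A_8$, so $f_8(p')\neq 0$. Now Weisner is applied to $f(p')$ (not to $f(pp')$), and the argument splits into four cases according to whether $p'=2$, $p'\mid q^3$, $p'\mid (q^2-1)/8$, or $p'\mid q+\sqrt{3q}+1$. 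In each case the nonvanishing of $f_8(p')$, combined with the specific divisibility structure of the remaining $\A_t$, forces $|G|<f(p')$, the desired contradiction. This intermediate step---forcing all $p$-multiple contributions into $\A_8$ and then running a case analysis on $f(p')$---is the piece your sketch is missing, and it is where the real work lies.
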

\begin{proof}
Let $p$ be a prime divisor of $q-\sqrt{3q}+1$.  We prove that $pp'\notin\omega(G)$ for every prime divisor $p'$ of $q^3(q^2-1)(q+\sqrt{3q}+1)$. Assume the contrary.  By applying  Lemma \ref{lem:multiple}, we observe that $f(p)$ is a multiple of the greatest divisor of $|G|$ that is prime to $p$. This implies that there exists a positive integer $r$ such that
\begin{align*}
  f(p)=\frac{q^3(q-1)(q^3+1)r}{|G_p|},
\end{align*}
where $G_{p}$ is a Sylow $p$-subgroup of $G$. Now by \eqref{eq:f}, we must have \begin{align}\label{eq:f-2}
  \sum_{t=3}^{8}f_t(p)=\frac{q^3(q-1)(q^3+1)r}{|G_p|}.
\end{align}
We again apply Lemma \ref{lem:m_i(G)} and conclude that $p$ is coprime to $\m_p(G)$. This implies that $\m_p(G)\in\A_8$, and consequently, $f_8(p)\neq0$. Note that
\begin{align*}
  f_{8}(p)=\sum_{{p\mid i},\ {m_i(G)\in \A_{8}}}\m_{i}(G)=q^3(q^2-1)(q+\sqrt{3q}+1)k,
\end{align*}
for some positive integer $k$.
Moreover, $q-\sqrt{3q}+1$ divides $f_{t}(p)$, for all $3\leq t\leq 7$. Therefore, if $\sum_{t=3}^{7}f_t(p)\neq 0$, then $q-\sqrt{3q}+1$ must divide
\begin{align*}
  \frac{q^3(q-1)(q^3+1)r}{|G_p|}-f_8(p)=q^3(q^2-1)(q+\sqrt{3q}+1)\left( \frac{(q-\sqrt{3q}+1)r}{|G_{p}|}-k \right),
\end{align*}
and so $q-\sqrt{3q}+1$ divides
\begin{align*}
  \frac{(q-\sqrt{3q}+1)r}{|G_{p}|}-k.
\end{align*}
Consequently, $q-\sqrt{3q}+1\leq (q-\sqrt{3q}+1)r/|G_{p}|$, and hence
\begin{align*}
  |G|=q^3(q-1)(q^3+1)\leq q^3(q^2-1)(q+\sqrt{3q}+1)\cdot  \frac{(q-\sqrt{3q}+1)r}{|G_{p}|}=f(p),
\end{align*}
which is impossible. Therefore $\sum_{t=3}^{7}f_t(p)=0$ which implies that $\m_{pp'}(G)\in\A_8$ and $f_8(p')\neq 0$.

Now we consider the following cases. In each case, we use the same argument to get a contradiction.\smallskip

\noindent {\bf Case 1.} If $p'=2$, then Lemma \ref{lem:m_i(G)} implies that $\m_{p'}(G)\in\A_2$, and so $f_2(p')\neq0$. Applying Lemma \ref{lem:multiple}, there exists a positive integer $r$ such that $f(p')=q^3(q-1)(q^3+1)r/8$, and so
\begin{align*}
  \sum_{t=2}^{8}f_t(p')=\frac{q^3(q-1)(q^3+1)r}{8}.
\end{align*}
Thus $q^3(q-1)(q^3+1)r/8-\sum_{t=2}^{6}f_t(p')=f_7(p')+f_8(p')$. Therefore
\begin{align*}
  (q^2-q+1)\left(\frac{q^3(q^2-1)r}{8}-\sum_{t=2}^{6}k_t\right)=f_7(p')+f_8(p'),
\end{align*}
where $f_t(p')=(q^2-q+1)k_t$ with $k_{t}$ positive integer, for $2\leq t\leq 6$.

Since $f_{8}(p')\neq 0$, we must have $f_7(p')+f_8(p')\neq 0$. Note that $q^2-1$ divides $f_7(p')+f_8(p')$ and by Proposition \ref{prop:nse},  $q^3$ also divides $f_7(p')+f_8(p')$. Then  $q^3(q^2-1)$ divides $q^3(q^2-1)r/8-\sum_{t=2}^{6}k_t$, and so $q^3(q^2-1)\leq q^3(q^2-1)r/8-\sum_{t=2}^{6}k_t< q^3(q^2-1)r/8$. Therefore, $|G|<f(p')$, which is impossible.\smallskip

\noindent {\bf Case 2.} If $p'$ divides $q^3$, then by Lemma \ref{lem:m_i(G)}, $\m_{p'}(G)\in\A_3$, and so $f_3(p')\neq0$. By Lemma \ref{lem:multiple}, there exists a positive integer $r$ such that $f(p')=(q-1)(q^3+1)r$, and so by \eqref{eq:f},
\begin{align*}
  \sum_{t=3}^{8}f_t(p')=(q-1)(q^3+1)r.
\end{align*}
This implies that   $\sum_{t=5}^{8}f_t(p')=(q-1)(q^3+1)r-f_3(p')-f_4(p')$ , where $f_t(p')=(q-1)(q^3+1)k_t$ with $k_{t}$ positive integer, for $t=3,4$. So $\sum_{t=5}^{8}f_t(p')=(q-1)(q^3+1)(r-k_3-k_4)$. Since $f_{8}(p')\neq 0$, it follows from Proposition \ref{prop:nse},  $q^3$ divides $\sum_{i=5}^{8}f_t(p')\neq 0$, and consequently, $q^3$ divides $r-k_3-k_4$. Thus $q^3\leq r-k_3-k_4<r$ which implying that $|G|<f(p')$, which is a contradiction.\smallskip

\noindent {\bf Case 3.} If $p'$ divides $(q^2-1)/8$, then applying Lemma \ref{lem:m_i(G)}, $\m_{p'}(G)\in\A_5\cup\A_6$, and so $f_5(p')+f_6(p')\neq0$. By Lemma \ref{lem:multiple}, there exists a positive integer $r$ such that $f(p')=q^3(q-1)(q^3+1)r/|G_{p'}|$, and so by \eqref{eq:f}, we have that
\begin{align*}
  \sum_{t=3}^{8}f_t(p')=\frac{q^3(q-1)(q^3+1)r}{|G_{p'}|}.
\end{align*}
Thus $\sum_{{t=3},{t\neq 5,6}}^{8}f_t(p')=q^3(q-1)(q^3+1)r/|G_{p'}|-f_5(p')-f_6(p')$, where $f_t(p')=q^3(q^2-q+1)k_t$ with $k_{t}$ positive integer, for $t=5,6$. Therefore,
\begin{align*}
  \sum_{{t=3}\atop{t\neq 5,6}}^{8}f_t(p')=q^3(q^2-q+1)\left(\frac{(q^2-1)r}{|G_{p'}|}-k_5-k_6\right).
\end{align*}
Note that  $q^2-1$ divides $\sum_{{t=3},{t\neq 5,6}}^{8}f_t(p')\neq 0$. Then $q^2-1$ divides $(q^2-1)r/|G_{p'}|-k_5-k_6$, and so $q^2-1\leq (q^2-1)r/|G_{p'}|-k_5-k_6<(q^2-1)r/|G_{p'}|$, and this implies that $|G|<f(p')$, which is impossible.\smallskip

\noindent {\bf Case 4.} If $p'$ divides $q+\sqrt{3q}+1$, then by Lemma \ref{lem:m_i(G)}, $\m_{p'}(G)\in\A_7$, and  so $f_7(p')\neq 0$. Then there exists a positive integer $r$ such that $f(p')=q^3(q-1)(q^3+1)r/|G_{p'}|$, by Lemma \ref{lem:multiple} and \eqref{eq:f}, we have that
\begin{align*}
  \sum_{t=3}^{8}f_t(p')=\frac{q^3(q-1)(q^3+1)r}{|G_{p'}|},
\end{align*}
and so $\sum_{{t=3},{t\neq7}}^{8}f_t(p')=q^3(q-1)(q^3+1)r/|G_{p'}|-f_7(p')$, where $f_7(p')=q^3(q^2-1)(q-\sqrt{3q}+1)k_7$ with $k_{7}$ positive integer. Therefore
\begin{align*}
  \sum_{{t=3}\atop{t\neq7}}^{8}f_t(p')=q^3(q^2-1)(q-\sqrt{3q}+1)\left(\frac{(q+\sqrt{3q}+1)r}{|G_{p'}|}-k_7\right).
\end{align*}
Since $q+\sqrt{3q}+1$ divides $\sum_{{t=3},{t\neq7}}^{8}f_i(p')\neq 0$, it follows  that $q+\sqrt{3q}+1$ divides $(q+\sqrt{3q}+1)r/|G_{p'}|-k_7$, and so $q+\sqrt{3q}+1\leq (q+\sqrt{3q}+1)r/|G_{p'}|-k_7<(q+\sqrt{3q}+1)r/|G_{p'}|$. Hence $|G|<f(p')$, which is impossible.

Moreover, counting the component consisting $2$, we conclude that $G$ has at least three components.
\end{proof}

As Frobenius and $2$-Frobenius groups have two connected components \cite[Theorems 1-2]{art:Chen}, we apply a well-known result of Williams \cite[Theorem A]{art:williams}, and conclude that

\begin{corollary}\label{cor:three}
The group $G$ has a normal series $1\unlhd H \unlhd K \unlhd G$ such that $H$ and $G/K$ are $\pi_1$-groups and $K/H$ is a non-abelian simple group, $H$ is a nilpotent group and $|G/K|$ divides $|\Out(K/H)|$.
\end{corollary}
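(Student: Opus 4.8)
\textbf{Proof proposal for Corollary~\ref{cor:three}.}
The plan is to appeal directly to Williams' structure theorem for finite groups with disconnected prime graph \cite[Theorem A]{art:williams}, whose statement says that a finite group $G$ with $t(G)\geq 2$ is either a Frobenius group, a $2$-Frobenius group, or admits a normal series $1\unlhd H\unlhd K\unlhd G$ with $K/H$ a non-abelian simple group, $H$ a nilpotent $\pi_1$-group, $G/K$ a $\pi_1$-group satisfying $|G/K|\mid|\Out(K/H)|$, and the odd-order components of $G$ among those of $K/H$. Since Proposition~\ref{prop:isolated} gives $t(G)\geq 3$, the first task is simply to rule out the two Frobenius-type cases.

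First I would quote \cite[Theorems 1--2]{art:Chen}, which assert that a Frobenius group and a $2$-Frobenius group each have exactly two prime-graph components. Because $t(G)\geq 3>2$ by Proposition~\ref{prop:isolated}, $G$ can be neither of these, so Williams' theorem forces the normal-series alternative, and reading off its conclusion yields precisely the statement of the corollary: the series $1\unlhd H\unlhd K\unlhd G$ with $H$ nilpotent, $H$ and $G/K$ being $\pi_1$-groups, $K/H$ non-abelian simple, and $|G/K|$ dividing $|\Out(K/H)|$.

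There is essentially no obstacle here: the corollary is a textbook-style deduction, and the only point requiring (minor) care is making sure the component count from Proposition~\ref{prop:isolated} is correctly interpreted as excluding the Frobenius and $2$-Frobenius possibilities — which is exactly what the citation to \cite{art:Chen} handles. The genuine work has already been done in establishing $t(G)\geq 3$; everything beyond that is bookkeeping. For completeness I would also note that since $|G|$ is even, the convention $2\in\pi_1$ applies, so $H$ and $G/K$ being $\pi_1$-groups is the natural normalization, and the odd components of $G$ coincide with odd components of the simple section $K/H$ — a fact that will be exploited in the subsequent argument identifying $K/H$ with ${}^2G_2(q)$.
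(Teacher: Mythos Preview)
Your proposal is correct and mirrors the paper's own argument exactly: the paper simply notes that Frobenius and $2$-Frobenius groups have two prime-graph components \cite[Theorems 1--2]{art:Chen}, so Proposition~\ref{prop:isolated} rules these out, and then invokes \cite[Theorem A]{art:williams} to obtain the stated normal series. Your additional remark about the odd components of $G$ lying among those of $K/H$ is a helpful anticipation of how the corollary is used downstream.
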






\subsection{Proof of Theorem \ref{thm:main}}\label{sec:proof}

Recall that $R={}^2G_2(q)$, where $q=3^{2m+1}$, and  $G$ is a finite group with $|G|=|R|$ and $\nse(G)=\nse(R)$. It follows from Corollary~\ref{cor:three} that  $G$ has a normal series $1\unlhd H \unlhd K \unlhd G$ such that $H$ and $G/K$ are $\pi_1$-groups and $K/H$ is a non-abelian simple group, $H$ is a nilpotent group and $|G/K|$ divides $|\Out(K/H)|$. Since $|G|$ is coprime to $5$, by \cite{art:Shi2}, 
the simple group $K/H$ is isomorphic to one the following groups:
\begin{enumerate}[{ \quad \rm (a)}]
     \item $L_2(q)$, $L_3(q)$, $U_3(q)$, $G_2(q)$, ${}^3D_4(q)$, where $q\equiv \pm2 \pmod{5}$;
     \item ${}^2G_2(q)$, where $q=3^{2n+1}\geq 27$.
\end{enumerate}
The fact that the size of Sylow $2$-subgroups of $G$ is $8$ rules out the possibilities $G_{2}(q')$ and ${}^3D_4(q')$. In the remaining cases, if $K/H$ had an element of order $8$, then its Sylow $2$-subgroups would be cyclic, and so by \cite[Lemma 1.4.1]{book:CFSG-char-2}, $K/H$ has a normal complement to a Sylow $2$-subgroup, which is a contradiction. Therefore, $K/H$ has no element of order $8$ and such simple groups are known by \cite[Theorem 2]{art:Mazurov-L4-U3}. Thus $K/H$ is isomorphic to one of the groups: $L_2(q')$ with $q'\equiv \pm 2 \pmod{5}$, $L_3(2^{t})$ with $t\geq 2$, $U_3(2^{t})$ with $t\geq 2$ and ${}^2G_2(q')$. Considering the fact that $16$ divides the order of $L_{3}(2^{t})$ and $U_{3}(2^{t})$, the simple group $K/H$ must be isomorphic to $L_{2}(q')$ or ${}^2G_2(q')$.

Suppose that $K/H$ is isomorphic to $L_{2}(q')$, where $q'={p'}^n$. We now apply Proposition~\ref{prop:isolated} and conclude that $q\pm\sqrt{3q}+1$ are odd order components of $K/H$.\smallskip

\noindent {\bf Case 1.} Let $p'=2$. Then $q'+1$ and $q'-1$ are the odd order components of $L_{2}(q')$, so $q+\sqrt{3q}+1=q'+1$ and $q-\sqrt{3q}+1=q'-1$, which is impossible.\smallskip

\noindent {\bf Case 2.} Let $p'\neq 2$. Then the odd order components of $L_{2}(q')$ are $q'$ and $(q'\pm 1)/2$, and so (i) $q+\sqrt{3q}+1=q'$ and $q-\sqrt{3q}+1=(q'+ 1)/2$, or (ii) $q+\sqrt{3q}+1=q'$ and $q-\sqrt{3q}+1=(q'- 1)/2$. The latter case never holds as the equation $q-3\sqrt{3q}+2=0$ has no positive integer solutions. Then (i) must occur in which case we have that $q-3\sqrt{3q}=0$. This holds if and only if $q=27$ and $q'=37$. Note that $|G/K|$ divides $|\Out(K/H)|=|\Out(L_2(37))|=2$ by Corollary~\ref{cor:three}.  Since $|K/H|=|L_{2}(37)|=2^2.3^2.19.37$ and $|G|=|^{2}G_{2}(27)|=2^3.3^9.7.13.19.37$, we conclude that $3^7.7.13$ divides $|H|$. Moreover $H$ divides $2.3^7.7.13$. Now Lemma \ref{lem:m_i(G)} implies that $\m_{13}(G)\in\mathcal{A}_5$. Since also $H\unlhd G$, this implies that $2^2.3^9.7.19.37\leq \m_{13}(G)=\m_{13}(H) < 2.3^7.7.13$, which is impossible.\smallskip

Therefore, $K/H$ is isomorphic to ${}^2G_2(q')$, where $q'=3^{2m'+1}$. Hence $t(K/H)=3$ and $q'\pm\sqrt{3q'}+1$ are odd order components of $K/H$, see~\cite[Table Id]{art:williams}. By Proposition~\ref{prop:isolated}, $q\pm\sqrt{3q}+1$ are also the odd order components of $K/H$. This implies that $q\pm\sqrt{3q}+1=q'\pm\sqrt{3q'}+1$, and consequently, $q'=q$. Therefore, $K/H\cong R$. Since $|G|=|K/H|=|R|$, we deduce that $G$ is isomorphic to $R=\ ^{2}G_{2}(q)$.



\def\cprime{$'$}

\end{document}